\newcommand{\subtitle}[1]{%
  \posttitle{%
    \par\end{center}
    \begin{center}\large#1\end{center}
    \vskip0.5em}%
}
\theoremstyle{plain}
\newtheorem{thm}{Theorem}[section]
\newtheorem{lem}[thm]{Lemma}
\newtheorem{prop}[thm]{Proposition}
\newtheorem{cor}[thm]{Corollary}
\theoremstyle{definition}
\newtheorem{rem}[thm]{Remark}
\newtheorem{example}[thm]{Example}
\newtheorem{defi}[thm]{Definition}
\newtheorem{alg}[thm]{Algorithm}
\DeclareMathOperator{\supp}{supp}
\renewcommand{\rho}{\varrho}
\DeclareMathAlphabet{\mathup}{OT1}{\familydefault}{m}{n}
\newcommand{\widebar}[1]{\mbox{\kern1.5pt\hbox{\vbox{\hrule height 0.6pt \kern0.35ex
        \hbox{\kern-0.15em \ensuremath{#1 }\kern0.0em}}}}\kern-0.1pt}
\newlength{\fixboxwidth}
\begin{document}

\title{Quantitative spectral gap estimate and Wasserstein contraction of simple slice sampling}

\author{Viacheslav Natarovskii\thanks{Institute for Mathematical Stochastics, 
Georg-August-Universit\"at G\"ottingen, Goldschmidtstra\ss e 7, 37077 G\"ottingen, 
Email: vnataro@uni-goettingen.de, daniel.rudolf@uni-goettingen.de}, 
Daniel Rudolf$^{\,*,}$\thanks{Felix-Bernstein-Institute for Mathematical Statistics 
 in the Biosciences, Goldschmidtstra\ss e 7, 37077 G\"ottingen}, 
 Bj\"orn Sprungk\thanks{Faculty of Mathematics and Computer Science, Technische Universit\"at Bergakademie Freiberg, bjoern.sprungk@math.tu-freiberg.de}}

\date{\today}

\maketitle

\begin{abstract}
We prove Wasserstein contraction of simple slice sampling for approximate sampling w.r.t. distributions with log-concave and rotational invariant Lebesgue densities.
This yields, in particular, an explicit quantitative lower bound of the spectral gap of simple slice sampling. Moreover, this lower bound carries over to more general target distributions depending only on the volume of the (super-)level sets of their unnormalized density.
\end{abstract}

{\bf Keywords: } Slice sampling, spectral gap, Wasserstein contraction 

{\bf Classification. Primary: 65C40; Secondary: 60J22, 62D99, 65C05.} \\
\\

\section{Introduction}
A challenging problem in Bayesian statistics and computational science is sampling w.r.t. distributions which are only known up to a normalizing constant. Assume that $G\subseteq\mathbb{R}^{d}$ and $\rho:G\rightarrow(0,\infty)$
is integrable w.r.t.~to the Lebesgue measure. The goal is to sample w.r.t.~the distribution 
determined by $\rho$, say $\pi$, that is,
\[
\pi(A)=\frac{\int_{A}\rho(x){\rm d}x}{\int_{G}\rho(x){\rm d}x},\qquad A\in\mathcal{B}(G).
\]
Here $\mathcal{B}(G)$ denotes the Borel $\sigma$-algebra. In most cases this can only be done approximately and the idea is to construct a (time-homogeneous) Markov chain $(X_n)_{n\in\mathbb{N}}$ which has $\pi$ as limit distribution, i.e., for increasing $n$ the distribution of $X_n$ converges to $\pi$. Slice sampling methods provide auxiliary variable Markov chains for doing this and several different versions have been proposed and investigated \cite{besag1993spatial,Hi98,latuszynski2014convergence,mira2001perfect,MiTi02,MuAdMa10,Ne03,RoRo99,RoRo02}. In particular also Metropolis-Hastings algorithms can be considered as such methods, see \cite{Hi98,RuUl15}. 
In the underlying work we investigate simple slice sampling which works as follows\footnote{It is straightforward to verify that $\pi$ is a stationary  distribution of the simple slice sampler.}: 
\begin{alg} \label{alg:simple_slice}
	Given the current state  $X_n=x\in G$ the simple slice sampling algorithm generates the next Markov chain
	instance $X_{n+1}$ by the following two steps: 
	\begin{enumerate}
		\item Draw $T_n$ uniformly distributed in $[0,\rho(x)]$, call the result $t$.
		\item Draw $X_{n+1}$ uniformly distributed on 
		\[
		G(t) := \{x\in G\mid \rho(x)\geq t\},
		\]
		the (super-) level set of $\rho$ at $t$.
	\end{enumerate}
\end{alg}
The charm of this algorithmic approach 
lies certainly in the empirically attestable and intuitively reasonable well-behaving convergence properties of the corresponding Markov chain. Indeed, robust convergence properties are also established theoretically. Mira and Tierney in \cite{MiTi02} prove uniform ergodicity under boundedness conditions on $G$ and $\rho$. Roberts and Rosenthal \cite{RoRo99} provide qualitative 
statements about geometric ergodicity under weak assumptions as well as prove
quantitative estimates of the total variation distance of the difference of the distribution of $X_n$ and $\pi$ 
under a condition on the initial state. However, less is known about the spectral gap. 
Namely, beyond the general implications \cite{RoRo97,Ru12} from uniform and geometric ergodicity 
of the results of \cite{MiTi02,RoRo99} there is, to our knowledge, no explicit estimate of the spectral gap of simple slice sampling available.
Let $U_\rho$ be the transition operator/kernel of a Markov chain generated by simple slice sampling of a distribution $\pi$ with (unnormalized) density $\rho$. The spectral gap is defined by
\[
{\rm gap}_\pi(U_\rho):= 1 - \Vert U_\rho \Vert_{L^0_2(\pi) \to L^0_2(\pi)},
\]
where $L^0_2(\pi)$ is the space of functions $f\colon G\to \mathbb{R}$ with zero mean and finite variance  
(i.e., $\mathbb{E}_\pi(f):=\int_G f {\rm d} \pi=0$; $\Vert f \Vert_{2,\pi}^2:= \int_G \vert f \vert^2 {\rm d} \pi <\infty$). 
A spectral gap, that is, ${\rm gap}_\pi(U_\rho)>0$, leads to desirable robustness and convergence properties. For example, it is well known that a spectral gap implies geometric ergodicity \cite{kontoyiannis2009geometric,RoRo97}, and since $U_\rho$ is reversible, it also implies a central limit theorem (CLT) 
for all $f\in L_2(\pi)$, see \cite{kipnis1986central}. In addition to that it allows the estimation of the CLT asymptotic variance \cite{flegal2010batch}. In particular, an explicit lower bound of ${\rm gap_{\pi}}(U_\rho)$ leads to quantitative estimates of the total variation distance and a mean squared error bound of Markov chain Monte Carlo. More precisely, it is well known, see for instance \cite[Lemma~2]{NoRu14}, that 
\[
\Vert \nu U_\rho^n -\pi \Vert_{\rm tv} \leq (1-{\rm gap}_\pi(U_\rho))^n \left\Vert \frac{{\rm d}\nu}{{\rm d}\pi}-1 \right\Vert_{2,\pi},
\]
where $\Vert \nu - \mu \Vert_{\rm tv} := \sup_{A \in \mathcal B(G)} |\nu(A)-\mu(A)|$ denotes the total variation distance,
$\nu=\mathbb{P}_{X_1}$ and $\nu U_\rho^n=\mathbb{P}_{X_{n+1}}$. Moreover, in \cite{Ru12} it is shown for the sample average 
that  
\[
\mathbb{E} \left \vert \frac{1}{n} \sum_{j=1}^n f(X_j) -\mathbb{E}_\pi(f) \right \vert^2 
\leq \frac{2}{n\cdot {\rm gap}_\pi(U_\rho)} + \frac{c_p \left \Vert \frac{{\rm d}\nu}{{\rm d}\pi} -1 \right \Vert_\infty}{n^2 \cdot {\rm gap}_\pi(U_\rho)},
\]
for any $p>2$ and any $f\colon G\to \mathbb{R}$ with $\Vert f \Vert_p^p = \int_G \vert f \vert^p {\rm d}\pi \leq 1$, where $c_p$ is an explicit constant which depends only on $p$.

The crucial drawback of simple slice sampling is that the second step in the algorithm is difficult to perform, in particular, in high-dimensional scenarios. However, in \cite{Ne03} and the more recent papers \cite{muller2013slice,MuAdMa10,nishihara2014parallel,tibbits2014automated,tibbits2011parallel} efficient slice sampling algorithms are designed, which mimic (to some extent) simple slice sampling. Already \cite{Ne03} constructs a number of algorithms which perform a single Markov chain step on the chosen level set instead of sampling the uniform distribution. We call those methods hybrid slice sampler. For us the motivation to study simple slice sampling is twofold:
\begin{enumerate}
	\item There is to our knowledge no quantitative statement about the spectral gap available and for simple slice sampling one would expect particularly good dependence on the dimension which we to some extent verify.
	\item In the recent work of \cite{latuszynski2014convergence} it is proven that certain hybrid slice sampler, in terms of spectral gap, are, on the one hand, worse than simple slice sampling but on the other hand not much worse. 
	Hence knowledge of the spectral gap of simple slice sampling might carry over to estimates of the spectral gap of hybrid slice samplers, in particular to those suggested in \cite{Ne03}. 
\end{enumerate}

Now let us explain the main results of the underlying work. For this let the Wasserstein distance w.r.t.~the Euclidean norm $\vert \cdot \vert$ of probability measures $\nu,\mu$ on $(G,\mathcal{B}(G))$ be given by 
\[
W(\mu,\nu):=\inf_{\gamma\in\Gamma(\mu,\nu)}\int_{G\times G}\vert x-y \vert\, {\rm d}\gamma(x,y),
\]
where $\Gamma(\mu,\nu)$ is the set of all couplings of $\mu$ and
$\nu$. The set of couplings is defined by all measures on $G\times G$ with marginals
$\mu$ and $\nu$. 

\textbf{First main result (Theorem~\ref{thm:WD_log-concave}):}
For a rotational invariant and log-concave (unnormalized) density $\rho$ defined either on Euclidean balls or the whole $\mathbb{R}^d$ 
we show in Theorem~\ref{thm:WD_log-concave} 
Wasserstein contraction of simple slice sampling, that is, for all $x,y\in G\subseteq \mathbb{R}^d$  we have 
\[
W(U_{\rho}(x,\cdot),U_\rho(y,\cdot)) \leq \left(1-\frac{1}{d+1}\right)\; \vert x-y\vert.
\]
This has a number of useful consequences. It is well known, see for instance \cite[Section~2]{RuSc18}, that this implies
\begin{equation}
\label{eq:mixing}
W(\nu U_\rho^n,\pi) \leq \left(1-\frac{1}{d+1}\right)^n W(\nu,\pi)
\end{equation}
for any initial distribution $\nu$ on $G$. In addition to that by \cite[Theorem~1.5]{ChWa94}, see also \cite[Proposition~30]{Ol09}, it implies ${\rm gap}_\pi(U_\rho) \geq 1/(d+1)$. Two simple examples which satisfy the assumptions of Theorem~\ref{thm:WD_log-concave} are given by 
$\rho(x)=\exp(-\vert x \vert)$ and $\rho(x)=\exp(-\vert x \vert ^2/2)$ where $G=\mathbb{R}^d$. 
For the former one Roberts and Rosenthal in \cite{RoRo02} argue with empirical experiments that simple slice sampling ``does not mix rapidly in higher dimensions''. 
Indeed, we observe theoretically that for increasing dimension the performance of simple slice sampling gets worse, however, we disagree to some extent to their statement, since the dependence on the dimension is moderate. Namely, from \eqref{eq:mixing} we obtain for any initial  distribution that for
$W(\nu U_\rho^n,\pi)\leq \varepsilon$ with $\varepsilon\in(0,1)$ we need
\[
n	\geq 	(d+1)\log(\varepsilon^{-1}W(\nu,\pi)),
\] 
which increases only linearly in $d$.

\textbf{Second main result (Theorem~\ref{thm:exp}):}
Based on the fact that in the second step of Algorithm~\ref{alg:simple_slice} we sample w.r.t.~the uniform distribution on the (super-)level set $G(t)$, one can conjecture that its geometric shape does not matter. 
However, its ``size'' or volume should matter\footnote{This is already observed in \cite{RoRo99,RoRo02}.}. 
To this end, we define the level-set function $\ell_\rho\colon(0,\infty) \to [0,\infty)$ of $\rho\colon G\to (0,\infty)$, with $G\subseteq \mathbb{R}^d$, by $\ell_\rho(t):=\lambda_d(G(t))$ for $t\in(0,\infty)$, where $\lambda_d$ denotes the $d$-dimensional Lebesgue measure. 
The idea is now, to identify certain ``nice'' properties of $\ell_\rho$ which lead to spectral gap estimates. 
Here, we propose classes $\Lambda_k$, with $k\in\mathbb N$, of level-set functions containing all continuous $\ell\colon (0,\infty)\to [0,\infty)$ satisfying, that
\begin{itemize}
	\item $\ell$ is strictly decreasing on the open interval 
	\[
	\supp \ell := (0,\sup\{t\in(0,\infty)\mid \ell(t)>0\})
	\]
	(which implies the existence of the inverse $\ell^{-1}$ on $(0,\left \Vert \ell \right \Vert_{\infty})$ with 
	$\left \Vert \ell \right \Vert_{\infty}:=\sup_{s\in(0,\infty)} \ell(s)$), and
	\item the function $g\colon (0,\left \Vert \ell \right \Vert_{\infty}^{1/k}) \to \supp \ell$, given by $g(s)=\ell^{-1}(s^k)$ is log-concave (i.e., $\log g$ is concave).
\end{itemize}
In Theorem~\ref{thm:exp} we then show that, if for an unnormalized density $\rho\colon G\to (0,\infty)$ we have $\ell_\rho\in \Lambda_k$ for a $k \in \mathbb N$, then
\begin{equation}  \label{eq:specgapest}
{\rm gap}_\pi(U_\rho) \geq \frac{1}{k+1}.
\end{equation}
A crucial tool in the proof of Theorem~\ref{thm:exp} is the equality of the spectral gap of $U_\rho$ and the spectral gap of the transition operator of the ``level Markov chain'' $(T_n)_{n\in\mathbb{N}}$ defined within Algorithm~\ref{alg:simple_slice}. This statement is provided in Lemma~\ref{lem:equal_spectral_gap}.
Observe, that in the formulation of the second main result we did not impose any uni-modality, log-concavity or rotational invariance assumption on $\rho$. 
It is allowed that the $d$-variate function $\rho$ has more than one mode, the only requirement is that the corresponding level-set function belongs to $\Lambda_k$. 
In many cases, for ${k=d}$ this is satisfied, however, also $k<d$ is possible, see Example~\ref{ex:exponential}. It contains the special case where $\rho$ is assumed to be the density of the $d$-variate standard normal distribution, which leads to $\ell_\rho \in \Lambda_{\lfloor d/2 \rfloor}$. In that case for large $d$ the lower bound from \eqref{eq:specgapest} improves the spectral gap estimate of Theorem~\ref{thm:WD_log-concave} roughly by a factor of $2$. We also consider a $d$-variate ``volcano density'', where we show that this leads to a level-set function in $\Lambda_1$, such that the corresponding spectral gap of simple slice sampling is independent of the dimension satisfying the lower bound $1/2$.

The outline of the paper is as follows.
In the next section we provide 
basic notation and prove our main result w.r.t.~the Wasserstein contractivity. Then, in Section~\ref{sec:Spectral-gap} we state and discuss the necessary operator theoretic definitions and investigate the important relation between the Markov chains $(X_n)_{n\in\mathbb N}$ and $(T_n)_{n\in\mathbb N}$ generated by the simple slice sampling algorithm.
There we also prove the main theorem about the lower bound of the spectral gap and illustrate the result after a discussion about the sets $\Lambda_k$ by examples. 

\section{Wasserstein contraction}

Let $(\Omega,\mathcal{F},\mathbb{P})$ be the common probability space on which all random variables are defined. The sequence of random variables $(X_n)_{n\in\mathbb{N}}$ determined by Algorithm~\ref{alg:simple_slice} provides a Markov chain on $G$, that is, for all $A\in \mathcal{B}(G)$ it satisfies (almost surely)
\[
\mathbb{P}(X_{n+1}\in A\mid X_1,\dots,X_n) = U_\rho(X_n,A),
\]
where the transition kernel of simple slice sampling $U_\rho \colon G\times \mathcal{B}(G) \to [0,1]$ is given by
\[
U_{\rho}(x,A)=\frac{1}{\rho(x)}\int_{0}^{\rho(x)}U_{t}(A)\,{\rm d}t.
\]
Here $U_t$ denotes the uniform distribution on the level set 
\[
G(t) := \{ x\in \mathbb{R}^d \mid \rho(x)\geq t \},
\]
thus, $U_t(A) = \frac{\lambda_d(A\cap G(t)}{\lambda_d(G(t))}$ for $t>0$. Note that by construction the transition kernel $U_\rho$ is reversible w.r.t.~$\pi$, that is,
\[
\int_{B}U_{\rho}(x,A)\pi({\rm d}x)=\int_{A}U_{\rho}(x,B)\pi({\rm d}x),\quad A,B\in\mathcal{B}(G).
\]
In particular, this implies that $\pi$ is a stationary distribution of $U_\rho$. Further, by $B^{(d)}_R$ we denote the
$d$-dimensional closed Euclidean ball with radius $R>0$ around zero and by $\mathring{B}^{(d)}_R$ its interior. For log-concave rotational invariant unnormalized densities we formulate now our Wasserstein contraction result of the simple slice sampler.

\begin{thm}  
	\label{thm:WD_log-concave}
	For $R\in(0,\infty]$ let $\varphi \colon [0,R)\to \mathbb{R}$ be a strictly increasing and convex function on $[0,R)$. Define $\rho\colon \mathring{B}^{(d)}_R \to (0,\infty)$ by $\rho(x):= \exp\left( -\varphi(\vert x \vert ) \right)$.
	Then, for any ${x,y\in \mathring{B}^{(d)}_R}$ we have
	\begin{equation} \label{eq:Wasscontr}
	W(U_{\rho}(x,\cdot),U_{\rho}(y,\cdot))\leq\left(1-\frac{1}{d+1}\right) \big\vert \left \vert x\right \vert -\left \vert y\right \vert\big \vert.
	\end{equation}
\end{thm}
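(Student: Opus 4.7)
The plan is to exploit the rotational symmetry of $\rho$ to reduce the problem to a one-dimensional Wasserstein estimate on level-set radii, and to harvest the factor $\frac{d}{d+1}$ from the mean radius of the uniform distribution on the unit ball. Since every super-level set is a concentric ball $G(t)=B^{(d)}_{R(t)}$ with $R(t):=\varphi^{-1}(-\log t)$, each $U_t$ is rotationally symmetric around the origin, and so is the mixture $U_\rho(x,\cdot)=\rho(x)^{-1}\int_0^{\rho(x)}U_t\,{\rm d}t$, which therefore depends on $x$ only through $r:=|x|$. Consequently $W(U_\rho(x,\cdot),U_\rho(y,\cdot))$ is a function of $r$ and $s:=|y|$ alone, and (w.l.o.g.~$r\geq s$) it suffices to construct a coupling $(X',Y')$ with $\expect|X'-Y'|\leq \frac{d}{d+1}(r-s)$.

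First I would write one simple-slice step from $x$ in polar form $X'=R^*_r\cdot V$, where $V\sim\mathrm{Unif}(B^{(d)}_1)$ is independent of a scalar random level-set radius $R^*_r$ supported on $[r,R)$. Changing variable from $t$ to $u=R(t)$ in the definition of $U_\rho(x,\cdot)$ produces the explicit CDF $F_r(u)=1-\exp(\varphi(r)-\varphi(u))$ on $[r,R)$. Hence, with $U\sim\mathrm{Unif}[0,1]$ independent of $V$ and $Z:=-\log(1-U)$, sampling $R^*_r$ via the quantile function as $\varphi^{-1}(\varphi(r)+Z)$ yields exactly the law $U_\rho(x,\cdot)$.

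The coupling then uses the \emph{same} pair $(Z,V)$ for both chains: $X'=\varphi^{-1}(\varphi(r)+Z)\,V$ and $Y'=\varphi^{-1}(\varphi(s)+Z)\,V$, i.e.~the monotone coupling of the two radii combined with a common random direction. This gives $|X'-Y'|=\bigl(\varphi^{-1}(\varphi(r)+Z)-\varphi^{-1}(\varphi(s)+Z)\bigr)|V|$, and because $\varphi$ is convex and strictly increasing, $\varphi^{-1}$ is concave and increasing; therefore the function $z\mapsto\varphi^{-1}(\varphi(r)+z)-\varphi^{-1}(\varphi(s)+z)$ is nonnegative and nonincreasing on $[0,\infty)$ with value $r-s$ at $z=0$. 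Taking expectations, using independence of $V$, and inserting the elementary identity $\expect|V|=\frac{d}{d+1}$ for the uniform distribution on the unit ball closes the argument.

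The only delicate point is the radius-direction decomposition of the second paragraph, since this representation is what licenses the use of a \emph{common} random direction $V$ for both chains; once it is in place, the concavity argument is elementary. Edge cases (for instance $R=\infty$, or $\sup\varphi<\infty$, where $\varphi^{-1}$ must be extended by $R$ and the pointwise bound $R^*_r-R^*_s\leq r-s$ has to be replaced by an averaged version) can be absorbed by a direct verification $W_1(F_r,F_s)\leq r-s$ via integration by parts using $\varphi''\geq 0$.
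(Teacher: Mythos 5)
Your proof is correct and is essentially the same as the paper's: it uses the identical coupling (common direction $V$ uniform on the unit ball, common level sampled via a shared uniform/exponential variable, level-set radii coupled monotonely) and the same key exploitation of the convexity of $\varphi$, merely phrased through quantile functions and the concavity of the extended $\varphi^{-1}$ rather than through the explicit coupling measure $u_{t,s}$ and the slope-monotonicity characterization $R_\varphi$. The only notable presentational difference is that by extending $\varphi^{-1}$ to a concave nondecreasing function on all of $[\varphi(0),\infty)$ (constant $R$ beyond $-\log\inf\rho$ when $\inf\rho>0$), the increment-decreasing property of concave functions subsumes all three cases the paper treats separately, so the ``direct verification'' you defer to is in fact unnecessary.
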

Before we prove the result let us provide some comments on it. 
\begin{rem}
	Let us emphasize here that we allow $R=\infty$, which leads to $\mathring{B}_R=\mathbb{R}^d$. 
	Moreover, we remark that since on the right-hand side of \eqref{eq:Wasscontr} we have the absolute value of the difference of the Euclidean norm of $x$ and $y$ an immediate consequence by the triangle inequality is
	\[
	W(U_{\rho}(x,\cdot),U_{\rho}(y,\cdot))\leq\left(1-\frac{1}{d+1}\right)\left\vert x - y\right \vert.
	\]
\end{rem}

\begin{example}  \label{ex:normal_contr}
	Let $\varphi \colon [0,\infty) \to \mathbb{R}$ be given as $\varphi(s)=s^2/2$. 
	This gives $\rho(x) = \exp(-\vert x \vert^2/2)$ which leads to $\pi$ being a multivariate standard normal density. 
	With 
	$R=\infty$ and the convexity of $\varphi$ we obtain \eqref{eq:Wasscontr}.
\end{example}

For the proof of Theorem~\ref{thm:WD_log-concave} we need the following auxiliary result. 

\begin{lem}
	\label{lema:WD-1}
	With $G=\mathring{B}^{(d)}_R$ let $\rho\colon G \to (0,\infty)$ be given as in Theorem~\ref{thm:WD_log-concave}. 
	Then, for any $x,y\in G$ we have
	\[
	W(U_{\rho}(x,\cdot),U_{\rho}(y,\cdot))\leq \frac{d}{d+1}\cdot\frac{1}{\lambda_{d}\big(B^{(d)}_1\big)^{1/d}}
	\int_{0}^{1}\left|\ell_\rho(r \rho(x))^{1/d}-\ell_\rho(r \rho(y))^{1/d}\right|{\rm d}r,
	\]
	where $\ell_\rho \colon (0,\infty) \to [0,\infty)$ is the level-set function defined by $\ell_\rho(t):=\lambda_{d}\left(G(t)\right)$.
\end{lem}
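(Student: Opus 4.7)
The plan is to express both transition kernels $U_\rho(x,\cdot)$ and $U_\rho(y,\cdot)$ as mixtures of uniform distributions on concentric Euclidean balls indexed by a common parameter, and then to reduce the $d$-dimensional Wasserstein problem to a one-dimensional radial calculation.

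First, performing the substitution $t=r\rho(x)$ in the definition of $U_\rho$ yields the mixture representation
\begin{equation*}
U_\rho(x,\cdot)=\int_0^1 U_{r\rho(x)}(\cdot)\,\diff r,
\end{equation*}
and analogously for $y$. Since both sides are mixtures over $r\in[0,1]$ against the same weight $\diff r$, convexity of the Wasserstein distance under common mixtures delivers
\begin{equation*}
W(U_\rho(x,\cdot),U_\rho(y,\cdot))\leq\int_0^1 W\!\left(U_{r\rho(x)},U_{r\rho(y)}\right)\diff r.
\end{equation*}
This bound is justified by observing that if $\gamma_r\in\Gamma(U_{r\rho(x)},U_{r\rho(y)})$ is a coupling for each $r$, then $\int_0^1\gamma_r\,\diff r$ is a coupling of the two mixtures, so that integrating the transport cost yields the right-hand side.

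Next, rotational invariance together with the strict monotonicity of $\varphi$ forces every level set $G(t)$ to be a closed Euclidean ball centered at the origin, with radius $R(t)=\bigl(\ell_\rho(t)/\lambda_d(B^{(d)}_1)\bigr)^{1/d}$. Consequently, each integrand $W(U_{r\rho(x)},U_{r\rho(y)})$ reduces to the Wasserstein distance between two uniform distributions on concentric balls of radii $a$ and $b$; without loss of generality take $a\leq b$. For this one-dimensional problem I would use the radial scaling coupling $Y:=(b/a)X$ with $X\sim U(B^{(d)}_a)$. A straightforward polar-coordinates computation gives $\expect|X|=\frac{d a}{d+1}$, hence $\expect|X-Y|=(b/a-1)\expect|X|=\frac{d}{d+1}(b-a)$, and therefore
\begin{equation*}
W\bigl(U(B^{(d)}_a),U(B^{(d)}_b)\bigr)\leq \tfrac{d}{d+1}\,|b-a|.
\end{equation*}
Substituting the explicit expression for $R(\cdot)$ and pulling the common factor $\lambda_d(B^{(d)}_1)^{-1/d}$ out of the integral reproduces exactly the claimed inequality.

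The only genuinely analytic step is this last one; the main obstacle is to identify an adequate coupling between the two concentric uniforms, after which the computation is essentially immediate. Note in passing that convexity of $\varphi$ plays no role in this lemma --- strict monotonicity of $\varphi$ already suffices to make the level sets balls --- so convexity must enter only when Theorem~\ref{thm:WD_log-concave} is deduced from the lemma, presumably via a monotonicity/contractivity property of the map $r\mapsto \ell_\rho(r\rho(x))^{1/d}$ with respect to the radial variable $|x|$.
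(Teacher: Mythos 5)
Your proof is correct and takes essentially the same approach as the paper's: both bound $W$ by the radial-scaling coupling (sample $z\sim\Uniform(B^{(d)}_1)$, map to $R(t)z$ and $R(s)z$ on the two balls), then integrate over $r\in[0,1]$; the only difference is that you factor the argument through the general convexity of $W$ under common mixtures plus the concentric-ball formula $\expect|X|=\tfrac{da}{d+1}$, while the paper writes the full Markovian coupling $c(x,y,\cdot)=\int_0^1 u_{r\rho(x),r\rho(y)}\,\diff r$ and evaluates the transport cost directly. Your closing observation that convexity of $\varphi$ is unused here (only strict monotonicity is needed to make the level sets balls) also matches the paper, where convexity enters only in the subsequent proof of Theorem~\ref{thm:WD_log-concave}.
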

\begin{proof}
	Since $\varphi$ is strictly increasing and convex 
	it is continuous and thus injective.  
	Moreover, note that the image of $\varphi$ satisfies $\varphi([0,R))=[-\log\|\rho\|_{\infty},-\log\inf\rho)$. Here $\Vert \rho \Vert_\infty := \sup_{x\in \mathring{B}^{(d)}_R} \rho(x)$ and $\inf\rho $ is an abbreviation of $\inf_{x\in \mathring{B}^{(d)}_R} \rho(x)$ with the convention $\log 0:= -\infty$.
	Hence, there exists the inverse  
	\[
	\varphi^{-1}\colon[-\log\|\rho\|_{\infty},-\log\inf\rho)\to [0,R).
	\]
	In the case $\inf\rho = 0$ the inverse $\varphi^{-1}$ is defined on $[-\log\|\rho\|_{\infty},\infty)$.
	In the case $\inf\rho>0$ we extend the inverse $\varphi^{-1}$ to $[-\log\|\rho\|_{\infty},\infty)$ by setting 
	\[
	\varphi^{-1}(t):=\sup\left\{ s\in[0,R)\colon\varphi(s)\leq t\right\} ,\qquad t\in[-\log\|\rho\|_{\infty},\infty).
	\]
	Note that by this extension we do not change $\varphi^{-1}$ in $[-\log\|\rho\|_{\infty},-\log\inf\rho)$
	and obtain 
	\[
	\varphi^{-1}(t)=R\qquad\forall 
	t\geq-\log\inf \rho.
	\]
	For simplicity of the notation we write $\ell$
	for $\ell_\rho$.
	Observe that
	\[
	G(t) = \{x\in  \mathring{B}^{(d)}_R \mid \vert x \vert \leq \varphi^{-1}(\log t^{-1}) \}
	= B_{\left(\ell(t)/\lambda_{d}(B_1^{(d)})\right)^{1/d}}^{(d)}, \qquad t\in(0,\Vert \rho \Vert_\infty),
	\]
	since $\ell(t) = \lambda_{d}(G(t))= \varphi^{-1}(\log t^{-1})^d\ \lambda_d( B_1^{(d)} )$. Thus, $U_t$ denotes the uniform distribution on the Euclidean ball around the origin with radius $\left(\ell(t)/\lambda_{d}(B_1^{(d)})\right)^{1/d}$.
	Now it is straightforward to verify that $u_{t,s}\colon \mathcal{B}(G^2) \to [0,1]$ determined by
	\[
	u_{t,s}(A\times B):=\frac{1}{\lambda_{d}\big(B^{(d)}_1\big)}\int_{B^{(d)}_1}\mathbf{1}_{A}\bigg(\bigg(\frac{\ell(t)}{\lambda_{d}(B^{(d)}_1)}\bigg)^{1/d}z\bigg)\mathbf{1}_{B}\bigg(\bigg(\frac{\ell(s)}{\lambda_{d}(B^{(d)}_1)}\bigg)^{1/d}z\bigg){\rm d}z,
	\]
	where $A,B\in \mathcal{B}(G)$, is a coupling of $U_{t}$ and $U_{s}$. For example, we have
	\begin{align*}
	u_{t,s}(A\times G) & = \frac{1}{\lambda_{d}\big(B^{(d)}_1\big)}\int_{B^{(d)}_1}\mathbf{1}_{A}\bigg(\bigg(\frac{\ell(t)}{\lambda_{d}(B^{(d)}_1)}\bigg)^{1/d}z\bigg) {\rm d}z\\
	& = \frac{1}{\ell(t)} \int_{G(t)} \mathbf{1}_A(y) {\rm d } y = U_t(A).
	\end{align*}
	Further, note that $c\colon G^2 \times \mathcal{B}(G^2) \to [0,1]$ 
	determined by
	\begin{align*}
	c(x,y,A\times B) & :=\int_{0}^{1}u_{r\rho(x),r\rho(y)}(A\times B){\rm d}r
	\end{align*}
	is a Markovian coupling of $U_{\rho}(x,\cdot)$ and $U_{\rho}(y,\cdot)$, i.e.,
	$c(x,y,A\times G)=U_\rho(x,A)$ and $c(x,y,G\times B)=U_\rho(y,B)$ for all $x,y\in G$ and $A,B\in \mathcal{B}(G)$.
	Indeed, since
	\[
	u_{t,s}(A\times G)=U_{t}(A),\qquad u_{t,s}(G\times B)=U_{s}(B)
	\]
	we get for example
	\[
	c(x,y,A\times G)=\int_{0}^{1}U_{r\rho(x)}(A){\rm d}r=\frac{1}{\rho(x)}\int_{0}^{\rho(x)}U_{t}(A){\rm d}t=U_{\rho}(x,A).
	\]
	Summarized, for arbitrary $x,\widetilde{x}\in G$ and $A,B\in \mathcal{B}(G)$ we obtain
	\[
	c(x,\widetilde{x},A \times B)=\frac{1}{\lambda_{d}\big(B^{(d)}_1\big)}\int_{0}^{1}\int_{B^{(d)}_1} \mathbf{1}_{A}\bigg(\bigg(\frac{\ell(r\rho(x))}{\lambda_{d}(B^{(d)}_1)}\bigg)^{1/d}z\bigg)\mathbf{1}_{B}\bigg(\bigg(\frac{\ell(r\rho(\widetilde{x}))}{\lambda_{d}(B^{(d)}_1)}\bigg)^{1/d}z\bigg){\rm d}z{\rm d}r.
	\]
	Using the Markovian coupling we obtain for arbitrary $x,\widetilde{x}\in G$ that
	\begin{align*}
	W(U_{\rho}(x,\cdot),U_{\rho}&(\widetilde{x},\cdot))  \leq\int_{G^{2}}\left|y-\widetilde{y}\right|c(x,\widetilde{x}, {\rm d}y\, {\rm d}\widetilde{y} )\\
	& =\frac{1}{\lambda_{d}(B^{(d)}_1)}\int_{0}^{1}\int_{B^{(d)}_1}\left|\left(\frac{\ell(r\rho(x))}{\lambda_{d}(B^{(d)}_1)}\right)^{1/d}-\left(\frac{\ell(r\rho(\widetilde{x}))}{\lambda_{d}(B^{(d)}_1)}\right)^{1/d}\right||z|{\rm d}z{\rm d}r\\
	& =\frac{\lambda_{d}\big(B^{(d)}_1\big)}{\lambda_{d}(B^{(d)}_1)^{1+1/d}}\cdot \frac{d}{d+1}\int_{0}^{1}\left|\ell(r\rho(\widetilde{x}))^{1/d}-\ell(r\rho(x))^{1/d}\right|{\rm d}r,
	\end{align*}
	which finishes the proof.
\end{proof}

\begin{rem}
	\label{rem: opt_coupl_ust}
	In the previous proof we used the coupling $u_{t,s} \in \Gamma(U_t,U_s)$ for $s,t\in (0,\Vert \rho \Vert_\infty)$. 
	In the setting of Lemma~\ref{lema:WD-1} observe that for $d=1$ it 
	is related to the optimal Hoeffding-Fr{\'e}chet coupling. This optimality property also holds for arbitrary $d>1$, which is justified as follows.
	We derive an upper bound for $W(U_t,U_s)$ by $u_{t,s}$,
	\begin{align*}
	W(U_t,U_s) & \leq \int_{G\times G} \vert x-y \vert\, {\rm d}u_{t,s}(x,y)\\
	&= \left \vert \left(\frac{\ell(t)}{\lambda_{d}(B^{(d)}_1)}\right)^{1/d} 
	- \left(\frac{\ell(s)}{\lambda_{d}(B^{(d)}_1)}\right)^{1/d}
	\right \vert \int_{B^{(d)}_{1}}|z| \frac{\textrm{d}z}{\lambda_d\big(B^{(d)}_{1}\big)}\\
	& = \left \vert \left(\frac{\ell(t)}{\lambda_{d}(B^{(d)}_1)}\right)^{1/d} 
	- \left(\frac{\ell(s)}{\lambda_{d}(B^{(d)}_1)}\right)^{1/d}
	\right \vert \frac{d}{d+1},
	\end{align*}
	where we used $\int_{B^{(d)}_{1}}|z|\textrm{d}z=\frac{d}{d+1}\lambda_d\big(B^{(d)}_{1}\big).$
	To derive a lower bound of $W(U_t,U_s)$ we apply the Kantorovich-Rubinstein duality formula of the Wasserstein distance (see e.g. \cite[Chapter~1.2]{Vi03},) w.r.t.~$U_t$ and $U_s$. 
	It is given by
	\[
	W(U_t,U_s) = \sup_{\left\Vert g\right\Vert _{{\rm Lip}}\leq1}
	\left|\int_{G} g(z)\left(U_{t}({\rm d}z)-U_{s}({\rm d}z)\right)\right|,
	\]
	where $\Vert g \Vert_{\rm Lip} := \sup_{x,y\in G} \frac{\vert g(x)-g(y) \vert}{\vert x- y \vert }$ for $g\colon G \to \mathbb{R}$. (The supremum is 
	taken over Lipschitz continuous functions with Lipschitz constant less or equal to $1$.) 
	Considering $h(z):= \vert z \vert$ and noting $\Vert h \Vert_{\rm Lip} \leq 1$ as well as
	\[
	\int_{G} \vert z\vert\, U_{t}({\rm d}z) = \left(\frac{\ell(t)}{\lambda_{d}(B^{(d)}_1)}\right)^{1/d} \int_{B^{(d)}_{1}}|z| \frac{\textrm{d}z}{\lambda_d\big(B^{(d)}_{1}\big)} 
	= \left(\frac{\ell(t)}{\lambda_{d}(B^{(d)}_1)}\right)^{1/d} \frac{d}{d+1}	 
	\]
	then yields
	\begin{align*}
	W(U_t,U_s) 
	& \geq \left \vert \int_G h(z) (U_t({\rm d}z)) - U_s({\rm d}z))  \right \vert
	=\left \vert \left(\frac{\ell(t)}{\lambda_{d}(B^{(d)}_1)}\right)^{1/d} 
	- \left(\frac{\ell(s)}{\lambda_{d}(B^{(d)}_1)}\right)^{1/d}
	\right \vert \frac{d}{d+1}.	 
	\end{align*}
	Hence
	\[
	W(U_t,U_s) = \left \vert \left(\frac{\ell(t)}{\lambda_{d}(B^{(d)}_1)}\right)^{1/d} 
	- \left(\frac{\ell(s)}{\lambda_{d}(B^{(d)}_1)}\right)^{1/d}
	\right \vert \frac{d}{d+1},
	\]
	which implies that $u_{t,s}$ is an optimal coupling.
\end{rem}

Now we provide the proof of Theorem~\ref{thm:WD_log-concave}.
\begin{proof}[Proof of Theorem~\ref{thm:WD_log-concave}]
	Again, for $\ell_\rho$ we write  $\ell$.
	To verify the claim of the theorem by Lemma~\ref{lema:WD-1} it is sufficient to show that 
	\[
	\frac{1}{\lambda_{d}(B^{(d)}_1)^{1/d}}\int_{0}^{1}\left|\ell(r\rho(x))^{1/d}-\ell(r\rho(y))^{1/d}\right|{\rm d}r\leq\big\vert \left \vert x\right \vert -\left \vert y\right \vert \big \vert,
	\quad \forall x,y\in \mathring{B}^{(d)}_R.
	\]
	Then, by the extended inverse $\varphi^{-1}$ derived in the proof of Lemma~\ref{lema:WD-1} we have
	\begin{equation} \label{eq: ellrhophi}
	\ell(t)=\lambda_{d}(B^{(d)}_1)(\varphi^{-1}(-\log t))^{d},\qquad t\in(0,\|\rho\|_{\infty}].
	\end{equation}
	Here also note that by the definition of $\rho$ we have $\varphi(0)=-\log\|\rho\|_{\infty}$.
	The representation \eqref{eq: ellrhophi} yields for any $r\in(0,1]$ and $x\in\mathring{B}^{(d)}_R$
	that 
	\begin{align*}
	\ell(r\rho(x))^{1/d} & =\lambda_{d}\big(B^{(d)}_1\big)^{1/d}\varphi^{-1}(\varphi(|x|)-\log r),
	\end{align*}
	which leads to
	\begin{align*}
	& \quad \lambda_{d}(B^{(d)}_1)^{-1/d}\int_{0}^{1}\left|\ell(r\rho(x))^{1/d}-\ell(r\rho(y))^{1/d}\right|\ {\rm d}r \\
	& =\int_{0}^{1}\left|\varphi^{-1}(\varphi(|x|)-\log r)-\varphi^{-1}(\varphi(|y|)-\log r)\right|\ {\rm d}r.
	\end{align*}
	We now show that for any $r\in(0,1]$ and any $s,\widetilde{s}\in[0,R)$ we have
	\[
	\left|\varphi^{-1}(\varphi(s)-\log r)-\varphi^{-1}(\varphi(\widetilde{s})-\log r)\right|\leq|s-\widetilde{s}|,
	\]
	which immediately yields the assertion 
	of the theorem. 
	
	For this let $s,\widetilde{s}\in [0,R)$ and assume without loss of generality that $s\leq\widetilde{s}$.
	Define for arbitrary fixed $s\in[0,R)$ the value 
	$r_{\min}(s)$ by 
	\[
	\varphi(s)-\log r_{\min}(s)=-\log\inf\rho.
	\]
	Hence
	\[
	\varphi^{-1}(\varphi(s)-\log r)=R,\qquad\forall r\leq r_{\min}(s).
	\]
	Moreover, we set
	\[
	s'(r) := \varphi^{-1}(\varphi(s)-\log r) \in [0,R), \qquad \forall r>r_{\min}(s)
	\]
	and since $\varphi$ is continuous and increasing we have 
	\[
	\varphi(s'(r))=\varphi(s)-\log r\geq\varphi(s),
	\qquad
	s\leq s'(r).
	\]
	The same arguments lead to
	\[
	\varphi^{-1}(\varphi(\widetilde{s})-\log r)=R,\qquad\forall r\leq r_{\min}(\widetilde{s})
	\]
	and 
	\[
	\varphi(\widetilde{s}'(r))=\varphi(\widetilde{s})-\log r\geq\varphi(\widetilde{s}),
	\qquad
	\widetilde{s}\leq\widetilde{s}'(r)
	\]
	for
	\[
	\widetilde{s}'(r) := \varphi^{-1}(\varphi(\widetilde{s})-\log r) \in [0,R),\qquad\forall r>r_{\min}(\widetilde{s}).
	\]
	Note, that due to $s\leq\widetilde{s}$ we have $\varphi(s)\leq\varphi(\widetilde{s})$
	and, thus, $r_{\min}(\widetilde{s})\leq r_{\min}(s)$. We distinguish three cases w.r.t.~$r\in(0,1]$: 
	\begin{enumerate}
		\item Assume $r\leq r_{\min}(\widetilde{s})$: Here $\varphi^{-1}(\varphi(s)-\log r)=\varphi^{-1}(\varphi(\widetilde{s})-\log r)=R$
		and 
		\[
		0=\left|\varphi^{-1}(\varphi(s)-\log r)-\varphi^{-1}(\varphi(\widetilde{s})-\log r)\right|\leq|s-\widetilde{s}|.
		\]
		\item Assume $r>r_{\min}(s)$: Here
		\[
		\left|\varphi^{-1}(\varphi(s)-\log r)-\varphi^{-1}(\varphi(\widetilde{s})-\log r)\right|=|s'(r)-\widetilde{s}'(r)|
		\]
		with $s'(r),\widetilde{s}'(r)\in [0,R)$. 
		We now exploit the convexity of $\varphi$ on $[0,R)$ which is equivalent to
		\[
		R_{\varphi}(u,v):=\frac{\varphi(u)-\varphi(v)}{u-v},\quad u,v\in[0,R),
		\]
		being increasing in $u$ for fixed $v$ and vice versa
		(since $R_{\varphi}$ is symmetric). 
		
		Hence, since $s\leq s'(r)$ and $\widetilde{s}\leq\widetilde{s}'(r)$, we obtain
		\begin{align*}
		\frac{\varphi(s'(r))-\varphi(\widetilde{s}'(r))}{s'(r)-\widetilde{s}'(r)} & \geq\frac{\varphi(s)-\varphi(\widetilde{s})}{s-\widetilde{s}}\\
		& =\frac{(\varphi(s)-\log r)-(\varphi(\widetilde{s})-\log r)}{s-\widetilde{s}}=\frac{\varphi(s'(r))-\varphi(\widetilde{s}'(r))}{s-\widetilde{s}}
		\end{align*}
		which implies
		\begin{equation}
		\label{eq:phi_convexity}
		|s'(r)-\widetilde{s}'(r)|\leq|s-\widetilde{s}|.
		\end{equation}
		\item Assume $r_{\min}(\widetilde{s})\leq r<r_{\min}(s)$: Here\footnote{This case only occurs if $\lim_{t \uparrow R}\varphi(t) = - \log \inf \rho < \infty$. In that situation define $\varphi(R) := - \log \inf \rho$ and observe that 
			with this extension $\varphi$ is increasing and convex on $[0,R]$.}
		\[
		\left|\varphi^{-1}(\varphi(s)-\log r)-\varphi^{-1}(\varphi(\widetilde{s})-\log r)\right|=|\widetilde{s}'(r)-R|.
		\]
		By the fact that $\varphi$ is increasing and convex it is continuous, such that
		there
		exists an $\hat{s}\in [0,R)$ with $s\leq\hat{s}\leq\widetilde{s}$
		satisfying
		\[
		-\log\inf\rho=\varphi(\hat{s})-\log r
		\]
		and, hence, $\hat{s}'(r)=R$. By employing the same reasoning as in~(\ref{eq:phi_convexity}) using the convexity of $\varphi$ we have that 
		\[
		|\widetilde{s}'(r)-R|\leq|\widetilde{s}-\hat{s}|\leq|s-\widetilde{s}|.
		\]
	\end{enumerate}
	
	This finishes the proof.
\end{proof}

It is fair to ask whether the estimate can be improved. The following example 
answers this question. Namely, in any dimension we find a parameterized family of unnormalized densities for which \eqref{eq:Wasscontr} holds with equality.

\begin{example}
	Let $\alpha>0$ be an arbitrary parameter.
	With the notation of Theorem~\ref{thm:WD_log-concave} set $R=\infty$ and $\varphi(s)=\alpha s$ on $[0,\infty)$. The function $\varphi$ is strictly increasing and concave on $[0,\infty)$. Hence, for $\rho \colon \mathbb{R}^d \to (0,\infty)$ with $\rho(x) = \exp(-\alpha\vert x\vert)$ 
	the estimate of \eqref{eq:Wasscontr} is true. 
	Further observe that $G(t) = B^{(d)}_{(\log t^{-1})/\alpha}$.
	For $x,y\in\mathbb{R}^d$ we use again the Kantorovich-Rubinstein duality formula of the Wasserstein distance 
	w.r.t.~$U_\rho(x,\cdot)$ and $U_\rho(y,\cdot)$, that is,
	\begin{align}  \label{KanRub}
	W(U_{\rho}(x,\cdot),U_{\rho}(y,\cdot)) & =\underset{\left\Vert g\right\Vert _{{\rm Lip}}\leq1}{\sup}\left|\int_{\mathbb{R}^d} g(z)\left(U_{\rho}(x,{\rm d}z)-U_{\rho}(y,{\rm d}z)\right)\right|,
	\end{align}
	where $\Vert g \Vert_{\rm Lip} := \sup_{x,y\in\mathbb{R}^d} \frac{\vert g(x)-g(y) \vert}{\vert x- y \vert }$ for $g\colon \mathbb{R}^d \to \mathbb{R}$. 
	We argue as in Remark~\ref{rem: opt_coupl_ust} and set
	$h(z)=\vert z \vert$. Note that this function satisfies $\Vert h \Vert _{\rm Lip} \leq 1$ as well as
	\begin{align*}
	\int_{\mathbb{R}^d} h(z) U_\rho(x,{\rm d} z) &  = 
	\frac{1}{\rho(x)} \int_0^{\rho(x)} \int_{B^{(d)}_{(\log t^{-1})/\alpha}} \vert z \vert \frac{{\rm d} z}{\lambda_d\big(B^{(d)}_{(\log t^{-1})/\alpha}\big)} {\rm d}t \\
	& = \frac{1}{\rho(x)} \int_0^{\rho(x)} \int_{B^{(d)}_{1}} \frac{\log t^{-1}}{\alpha}\cdot \vert z \vert \frac{{\rm d} z}{\lambda_d\big(B^{(d)}_{1}\big)} {\rm d}t \\ 
	& = \frac{d}{(d+1)\alpha} \cdot \frac{1}{\rho(x)} \int_0^{\rho(x)} \log t^{-1} {\rm d} t
	= \frac{d}{(d+1)\alpha} \left( -\log\rho(x)-1 \right)\\
	& = \frac{d}{(d+1)\alpha} \left( \alpha \vert x \vert  -1 \right),
	\end{align*}
	where we again used the fact that
	$\int_{B^{(d)}_{1}}|z|\textrm{d}z=\frac{d}{d+1}\lambda_d\big(B^{(d)}_{1}\big).$
	Hence, by \eqref{KanRub}, employing the function $h$ we get a lower bound of $W(U_{\rho}(x,\cdot),U_{\rho}(y,\cdot))$,
	which coincides with the upper bound \eqref{eq:Wasscontr}.
	Thus, the Markovian coupling $c(x,y,\cdot)\in \Gamma(U_\rho(x,\cdot),U_\rho(y,\cdot))$
	constructed in Lemma~\ref{lema:WD-1}
	is in this scenario optimal and 
	\[
	W(U_{\rho}(x,\cdot),U_{\rho}(y,\cdot)) =\left(1-\frac{1}{d+1}\right)\big\vert \vert x\vert - \vert y\vert \big\vert, \qquad x,y\in \mathbb{R}^d.
	\]
	This establishes that the inequality stated in Theorem~\ref{thm:WD_log-concave} can, in general, not be improved.
\end{example}

\section{Spectral gap estimate}
\label{sec:Spectral-gap}
In this section we investigate spectral gap properties of the Markov operator induced by the transition kernel $U_\rho$ of the Markov chain $(X_n)_{n\in\mathbb{N}}$. For this we need further definitions. By $L_2(\pi)$ we denote the Hilbert space of functions $f\colon G\to \mathbb{R}$ with finite norm $\Vert f \Vert_{2,\pi} := \left( \int_G \vert f \vert^2 {\rm d}\pi \right)^{1/2}$. By the reversibility of $U_\rho$ we have that $\pi$ is a stationary distribution. The transition kernel $U_\rho$ can be extended to a linear operator
${U}_{\rho}\colon L_{2}(\pi)\to L_{2}(\pi)$ defined by 
\[
U_{\rho}f(x):=\int_{G}f(y)U_{\rho}(x,{\rm d}y), \qquad x\in G.
\]
It is well known that a general Markov operator is self-adjoint on $L_2(\pi)$ iff the corresponding transition kernel is reversible w.r.t.~$\pi$, see for example \cite[Lemma~3.9]{Ru12}. 
We denote the (mean) functional $\mathbb{E}_\pi \colon L_2(\pi) \to \mathbb{R}$ by $\mathbb{E}_\pi(f):= \int_G f {\rm d}\pi$ and note that this can be extended to a bounded linear operator $\mathbb{E}_\pi \colon L_2(\pi) \to L_2(\pi)$ 
with $\mathbb{E}_\pi(f) \equiv \int_G f {\rm d}\pi$. 
With this notation the spectral gap of $U_{\rho}$ is determined by the operator norm of $U_\rho - \mathbb{E}_\pi$, i.e., it is given by 
\[
{\rm gap}_\pi (U_{\rho}):=1-\left\Vert U_{\rho}-\mathbb{E}_{\pi}\right\Vert _{L_{2}(\pi)\to L_{2}(\pi)}.
\]
Further let $L^0_2(\pi)$ be the set of functions $f\in L_2(\pi)$ with $\mathbb{E}_\pi(f)=0$. Using the normed linear space $L_2^0(\pi)$ it is well known that $\Vert U_\rho \Vert_{L_2^0(\pi)\to L_2^0(\pi)}=\Vert U_\rho - \mathbb{E}_\pi\Vert _{L_{2}(\pi)\to L_{2}(\pi)}$, see e.g. \cite[Lemma~3.16]{Ru12}, such that
\[
{\rm gap}_\pi (U_{\rho})=1-\left\Vert U_{\rho}\right\Vert _{L^0_{2}(\pi)\to L^0_{2}(\pi)}.
\]
An immediate consequence of 
Theorem~\ref{thm:WD_log-concave}, for example by applying \cite[Proposition~30]{Ol09}, is the following:
\begin{cor}  \label{cor:spec_gap_from_Wass}
	Assume that $\varphi$ satisfies the conditions formulated in Theorem~\ref{thm:WD_log-concave} and
	$\rho(x)=\exp(-\varphi(\vert x\vert))$. Then 
	\[
	{\rm gap}_\pi (U_{\rho}) \geq \frac{1}{d+1}.
	\]
\end{cor}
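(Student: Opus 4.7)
The plan is to deduce the corollary as a direct consequence of the Wasserstein contraction in Theorem~\ref{thm:WD_log-concave} together with the quoted result \cite[Proposition~30]{Ol09}, which turns Wasserstein-1 contraction for a reversible kernel into a spectral gap estimate of the same size.

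First, Theorem~\ref{thm:WD_log-concave} together with the triangle-inequality remark immediately following it yields, for all $x,y \in G \subseteq \mathbb{R}^d$,
\[
W(U_\rho(x,\cdot), U_\rho(y,\cdot)) \leq \left(1-\frac{1}{d+1}\right)|x-y|.
\]
By the Kantorovich-Rubinstein duality (recalled in the example following Theorem~\ref{thm:WD_log-concave}) this contraction is equivalent to the statement that $U_\rho$ maps the class of $1$-Lipschitz functions $f\colon G\to\mathbb{R}$ into the class of $(1-1/(d+1))$-Lipschitz functions. Since $U_\rho$ is reversible with respect to $\pi$, as observed immediately after the definition of the transition kernel, invoking \cite[Proposition~30]{Ol09} with contraction rate $\kappa = 1 - 1/(d+1)$ then lower-bounds the spectral gap by $1-\kappa = 1/(d+1)$, which is exactly the claim.

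The only point requiring a brief check is that the hypotheses of \cite[Proposition~30]{Ol09} apply in our setting. Reversibility is already established. In the bounded case $R<\infty$ the state space is compact and every Lipschitz function is bounded, so integrability is trivial. For $R=\infty$ one needs a sufficiently rich class of Lipschitz functions to lie in $L_2(\pi)$; this is automatic because log-concavity and rotational invariance of $\rho$ force $\pi$ to have at least exponential tails, so every Lipschitz function of at most polynomial growth is square-integrable with respect to $\pi$, and such a class is dense in $L_2(\pi)$. Thus no additional work beyond citing \cite[Proposition~30]{Ol09} is required, and the main (modest) obstacle is really only this verification of the ambient hypotheses.
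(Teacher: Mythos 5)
Your proposal is correct and takes essentially the same route as the paper: the authors also obtain Corollary~\ref{cor:spec_gap_from_Wass} as an immediate consequence of the Wasserstein contraction in Theorem~\ref{thm:WD_log-concave} by citing \cite[Proposition~30]{Ol09} (the paper also points to \cite[Theorem~1.5]{ChWa94} as an alternative reference). Your additional verification that the hypotheses of Ollivier's result are met (reversibility of $U_\rho$, and integrability/density of Lipschitz functions in $L_2(\pi)$ via the exponential tails forced by convexity of $\varphi$) is a reasonable and correct elaboration of what the paper leaves implicit.
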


The aim of this section is to extend and improve the previous estimate to a larger class of density functions which are not necessarily log-concave and rotational invariant.

For this, in addition to the Markov chain $(X_n)_{n\in\mathbb{N}}$, the auxiliary variable Markov chain $(T_n)_{n\in\mathbb{N}}$
also determined by Algorithm~\ref{alg:simple_slice} is useful. In the next section we introduce the corresponding transition kernel, provide a relation to $U_\rho$ and investigate further properties of $(T_n)_{n\in\mathbb{N}}$.

\subsection{Auxiliary variable Markov chain}

The sequence of auxiliary random variables $(T_{n})_{n\in\mathbb{N}}$ from
Algorithm~\ref{alg:simple_slice} provides also a Markov chain. In contrast to $(X_n)_{n\in\mathbb{N}}$ the Markov chain $(T_n)_{n\in\mathbb{N}}$ is defined on $(\mathbb{R}^+,\mathcal{B}(\mathbb{R}^+))$, with $\mathbb{R}^+:= (0,\infty)$ and
the transition kernel is given by 
\[
Q_{\rho}(t,B)=\frac{1}{\lambda_{d}(G(t))}\int_{G(t)}\frac{\lambda_{1}\left(B\cap[0,\rho(x)]\right)}{\rho(x)}{\rm d}x,\qquad B\in\mathcal{B}(\mathbb{R}^+).
\]
Recall that the level-set function of $\rho$ is given by $\ell_\rho(t) = \lambda_d(G(t))$ and define
a probability measure $\mu$ on $(\mathbb{R}^+,\mathcal{B}(\mathbb{R}^+))$
by
\[
\mu(B):=\frac{\int_{B}\ell_\rho(t){\rm d}t}{\int_{0}^{\infty}\ell_\rho(r){\rm d}r},\qquad B\in\mathcal{B}(\mathbb{R}^+).
\]
From \cite[Lemma~1]{latuszynski2014convergence} it follows that the transition kernel $Q_\rho$ is reversible w.r.t.~$\mu$. For the convenience of the reader we prove this fact in our setting. 
\begin{lem}
	The transition kernel $Q_{\rho}$ on $(\mathbb{R}^+,\mathcal{B}(\mathbb{R}^+))$ is reversible w.r.t.~$\mu$.
\end{lem}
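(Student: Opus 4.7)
The plan is to verify the detailed balance identity
\[
\int_A Q_\rho(t,B)\,\mu(\diff t) = \int_B Q_\rho(s,A)\,\mu(\diff s), \qquad A,B\in\Borel(\R^+),
\]
by substituting the definitions and showing both sides equal a common expression that is manifestly symmetric in $A$ and $B$.

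First I would write out the left-hand side. By the definition of $\mu$, we have $\mu(\diff t) = Z^{-1}\ell_\rho(t)\diff t$ where $Z=\int_0^\infty\ell_\rho(r)\diff r$, and by the definition of $Q_\rho$, the integrand carries a factor $1/\ell_\rho(t)$. These two factors cancel, leaving
\[
\int_A Q_\rho(t,B)\,\mu(\diff t) = \frac{1}{Z}\int_A \int_{G(t)} \frac{\lambda_1(B\cap[0,\rho(x)])}{\rho(x)}\,\diff x\,\diff t.
\]
The key observation is then that $x\in G(t)$ is equivalent to $\rho(x)\geq t$, so $\int_{G(t)}\diff x$ can be written as $\int_G \ind\{\rho(x)\geq t\}\diff x$. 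This lets me apply Fubini to swap the order of integration.

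After swapping, the inner integral over $t$ becomes $\int_A \ind\{t\leq \rho(x)\}\diff t = \lambda_1(A\cap[0,\rho(x)])$, producing
\[
\int_A Q_\rho(t,B)\,\mu(\diff t) = \frac{1}{Z}\int_G \frac{\lambda_1(A\cap[0,\rho(x)])\,\lambda_1(B\cap[0,\rho(x)])}{\rho(x)}\,\diff x.
\]
The right-hand side of the expression is symmetric in $A$ and $B$, so the analogous computation starting from $\int_B Q_\rho(s,A)\,\mu(\diff s)$ yields exactly the same quantity. This proves reversibility.

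The only mildly delicate point is the Fubini step: one should note that the integrand $\ind\{\rho(x)\geq t\}\,\lambda_1(B\cap[0,\rho(x)])/\rho(x)$ is nonnegative and jointly measurable, so Tonelli applies without integrability concerns, and the swap is unconditionally justified. Aside from that, the proof is just bookkeeping, and no deeper obstacle arises.
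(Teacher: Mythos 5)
Your proof is correct and follows essentially the same route as the paper's: substitute the definitions, cancel the $\ell_\rho(t)$ factors, use the equivalence $x\in G(t)\Leftrightarrow t\leq\rho(x)$, and invoke Fubini/Tonelli to exhibit a manifestly symmetric expression in $A$ and $B$. The paper stops at a triple-integral form and notes its symmetry directly, whereas you integrate out $t$ to get a single integral over $G$; this is a cosmetic rather than a substantive difference.
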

\begin{proof}
	For any $A,B\in\mathcal{B}(\mathbb{R}^+)$ we have
	\begin{align*}
	\int_{B}Q_{\rho}(t,A)\mu({\rm d}t) & =\int_{B}\frac{1}{\lambda_{d}(G(t))}\int_{G(t)}\frac{\lambda_{1}\left(A\cap[0,\rho(x)]\right)}{\rho(x)}{\rm d}x\,\frac{\ell_\rho(t){\rm d}t}{\int_{0}^{\infty}\ell_\rho(r){\rm d}r}\\
	& =\int_{0}^{\infty}\boldsymbol{1}_{B}(t)\int_{G}\frac{\mathbf{1}_{G(t)}(x)}{\rho(x)}\int_{0}^{\infty}\boldsymbol{1}_{A}(s)\boldsymbol{1}_{[0,\rho(x)]}(s){\rm d}s\,\frac{{\rm d}x\,{\rm d}t}{\int_{0}^{\infty}\lambda_{d}(G(r)){\rm d}r}.
	\end{align*}
	Using the fact that $\mathbf{1}_{G(s)}(x)=\mathbf{1}_{[0,\rho(x)]}(s)$
	we have
	\begin{align*}
	\int_{B}Q_{\rho}(t,A)\mu({\rm d}t)=\int_{0}^{\infty}\int_{G}\int_{0}^{\infty}\boldsymbol{1}_{A}(s)\boldsymbol{1}_{B}(t)\frac{\mathbf{1}_{G(t)}(x) \mathbf{1}_{G(s)}(x)}{\rho(x)}\frac{{\rm d}s\,{\rm d}x\,{\rm d}t}{\int_{0}^{\infty}\lambda_{d}(G(r)){\rm d}r}.
	\end{align*}
	Note that the right-hand side of the previous equation is symmetric
	in $A$ and $B$, such that we can change their roles and argue backwards. This leads to
	\[
	\int_{B}Q_{\rho}(t,A)\mu({\rm d}t)
	=\int_{A}Q_{\rho}(t,B)\mu({\rm d}t),
	\]
	which finishes the proof.
\end{proof}
Now we present a relation of the spectral gap of $U_\rho$ to the spectral gap of $Q_\rho$. Here we need
the Hilbert space $L_2(\mu)$, which consists of functions $h\colon \mathbb{R}^+ \to \mathbb{R}$ 
with finite $\Vert h \Vert_{2,\mu}:= \left(\int_{\mathbb{R}^+} \vert h \vert^2 \mu({\rm d}t) \right)^{1/2}$. To state the spectral gap of $Q_\rho$ let $\mathbb{E}_\mu \colon L_2(\mu) \to \mathbb{R}$
be the (mean) functional given by $\mathbb{E}_\mu h := \int_{\mathbb{R}^+} h {\rm d} \mu$, which we consider as linear operator mapping $L_2(\mu)$ functions to constant ones. Then, the spectral gap
of $Q_\rho$ is given by the operator norm
\[
{\rm gap}_\mu (Q_\rho):= 1-\Vert Q_\rho - \mathbb{E}_\mu \Vert_{L_2(\mu)\to L_2(\mu)},
\]
where the transition kernel $Q_\rho$ is extended to the self-adjoint Markov operator $Q_\rho \colon L_2(\mu) \to L_2(\mu)$ defined by
\[
Q_\rho h(t):= \int_{\mathbb{R}^+} h(s) Q_\rho(t,{\rm d}s),
\qquad
t \in \mathbb{R}^+.
\]
Note that the self-adjointness here comes (again as for $U_\rho$) by the fact that $Q_\rho$ is reversible.
With this notation we obtain:
\begin{lem}
	\label{lem:equal_spectral_gap}The spectral gaps of $Q_\rho$ and $U_\rho$ coincide, that is,
	${\rm gap}_\pi(U_{\rho})={\rm gap}_\mu(Q_{\rho}).$
\end{lem}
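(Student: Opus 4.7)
The plan is to factor both Markov operators through a common pair of intertwining operators and then invoke the standard Hilbert space identity $\|SS^*\| = \|S^*S\|$. Concretely, introduce
\[
S\colon L_2(\mu) \to L_2(\pi), \qquad Sh(x) := \frac{1}{\rho(x)}\int_0^{\rho(x)} h(t)\,{\rm d}t,
\]
\[
T\colon L_2(\pi) \to L_2(\mu), \qquad Tf(t) := \frac{1}{\ell_\rho(t)}\int_{G(t)} f(y)\,{\rm d}y .
\]
A direct application of Jensen's inequality together with Fubini shows $\|S\|,\|T\|\le 1$, so both are bounded. Unwinding the definitions one sees immediately that $U_\rho = S T$ and $Q_\rho = T S$ as operators on $L_2(\pi)$ and $L_2(\mu)$ respectively.

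The next step is to verify that $T = S^\ast$. For $h\in L_2(\mu)$ and $f\in L_2(\pi)$ Fubini yields
\[
\langle Sh,f\rangle_\pi
 = \frac{1}{Z}\int_G f(x)\!\int_0^{\rho(x)} h(t)\,{\rm d}t\,{\rm d}x
 = \frac{1}{Z}\int_0^\infty h(t)\int_{G(t)} f(y)\,{\rm d}y\,{\rm d}t,
\]
where $Z:=\int_G \rho(x)\,{\rm d}x$. The crucial identity $Z = \int_0^\infty \ell_\rho(r)\,{\rm d}r$ (layer-cake formula) then lets us rewrite the right-hand side as $\int_0^\infty h(t)\,Tf(t)\,\mu({\rm d}t) = \langle h,Tf\rangle_\mu$. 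Hence $S^\ast = T$, and in particular $U_\rho = S S^\ast$ and $Q_\rho = S^\ast S$.

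The third step is to restrict to the mean-zero subspaces. By the same layer-cake identity one checks that $S$ and $T$ map constants to constants and that $\int T f\,{\rm d}\mu = \int f\,{\rm d}\pi$ and $\int S h\,{\rm d}\pi = \int h\,{\rm d}\mu$. Consequently $S\colon L_2^0(\mu)\to L_2^0(\pi)$ and $T\colon L_2^0(\pi)\to L_2^0(\mu)$, and denoting these restrictions by $S_0$ and $T_0$ the adjoint relation persists, i.e.\ $S_0^\ast = T_0$. Therefore
\[
\|U_\rho\|_{L_2^0(\pi)\to L_2^0(\pi)}
 = \|S_0 S_0^\ast\|
 = \|S_0\|^2
 = \|S_0^\ast S_0\|
 = \|Q_\rho\|_{L_2^0(\mu)\to L_2^0(\mu)},
\]
which gives ${\rm gap}_\pi(U_\rho) = {\rm gap}_\mu(Q_\rho)$.

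The only place that requires care is the adjoint computation: the normalization in $\mu$ involves the factor $1/\int_0^\infty \ell_\rho(r)\,{\rm d}r$ while the normalization in $\pi$ involves $1/\int_G \rho(x)\,{\rm d}x$, and the whole argument hinges on these two constants being equal, which is precisely the layer-cake formula $\int_G \rho\,{\rm d}x = \int_0^\infty \lambda_d(G(t))\,{\rm d}t$. Once this is in place, and once one has verified the elementary stability of the zero-mean subspace under $S$ and $T$, the conclusion follows from the textbook fact that for any bounded Hilbert space operator $A$ the operators $A A^\ast$ and $A^\ast A$ share the same operator norm.
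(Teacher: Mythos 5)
Your proof is correct and uses the same central idea as the paper: factor $U_\rho = VV^*$ and $Q_\rho = V^*V$ through the conditional-expectation operators (your $S$ and $T$ are exactly the paper's $V$ and $V^*$), verify the adjoint relation using the layer-cake identity $\int_G \rho\,{\rm d}x = \int_0^\infty \ell_\rho(r)\,{\rm d}r$, and then invoke $\|AA^*\| = \|A\|^2 = \|A^*A\|$.

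The one place where you diverge is how the mean operator is removed. You restrict $S$ and $T$ to the mean-zero subspaces $L_2^0(\mu)$, $L_2^0(\pi)$, checking that both operators preserve zero mean and that the adjoint relation survives restriction, and then apply the norm identity to $S_0$. The paper instead stays in the full $L_2$ spaces, introduces the constant operators $\tilde S g := \int g\,{\rm d}\mu$ and $\tilde S^* f := \int f\,{\rm d}\pi$, verifies the algebraic identities $\mathbb{E}_\pi = \tilde S \tilde S^* = \tilde S V^* = V \tilde S^*$ (and analogously for $\mathbb{E}_\mu$), sets $R := V - \tilde S$, and expands $RR^* = U_\rho - \mathbb{E}_\pi$, $R^*R = Q_\rho - \mathbb{E}_\mu$ before applying the norm identity to $R$. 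The two routes are equivalent — $R$ agrees with $V$ on $L_2^0(\mu)$ and kills constants — so neither buys something the other lacks; yours trades the handful of algebraic identities ($\tilde S V^* = V \tilde S^*$, etc.) for the subspace-invariance check. Both are clean. You correctly flagged the load-bearing step, the layer-cake equality of the normalizing constants, without which the adjoint computation would pick up a spurious multiplicative factor.
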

\begin{proof}
	Define the linear operators $V\colon L_{2}(\mu)\to L_{2}(\pi)$ and $V^{*}\colon L_{2}(\pi)\to L_{2}(\mu)$
	by 
	\begin{align*}
	(Vg)(x) & :=\frac{1}{\rho(x)}\int_{0}^{\rho(x)}g(t){\rm d}t,\quad g\in L_{2}(\mu),\\
	(V^{*}f)(t) & :=\frac{1}{\lambda_{d}(G(t))}\int_{G(t)}f(x){\rm d}x,\quad f\in L_{2}(\pi).
	\end{align*}
	Now we show that $V^{*}$ is the adjoint operator of $V$,
	i.e., $\langle Vg,f\rangle_{\pi}=\langle g,V^{*}f\rangle_{\mu}$, where
	$\langle\cdot,\cdot\rangle_{\pi}$ and $\langle\cdot,\cdot\rangle_{\mu}$
	are the inner products of $L_{2}(\pi)$ and $L_{2}(\mu)$, respectively.
	We have
	\begin{align*}
	\langle Vg,f\rangle_{\pi} & =\int_{G}(Vg)(x)f(x)\pi({\rm d}x)
	=\int_{G}\frac{1}{\rho(x)}\int_{0}^{\rho(x)}g(t){\rm d}t\,f(x)\frac{\rho(x)}{\int_{G}\rho(y){\rm d}y}{\rm d}x\\
	& =\int_{G}\int_{0}^{\infty}\boldsymbol{1}_{[0,\rho(x)]}(t)g(t)f(x){\rm d}t\frac{{\rm d}x}{\int_{G}\rho(y){\rm d}y}.
	\end{align*}
	Further we use the fact that $\boldsymbol{1}_{[0,\rho(x)]}(t)=\boldsymbol{1}_{G(t)}(x)$, that $\int_{G}\rho(y){\rm d}y=\int_{0}^{\infty}\ell_\rho(r){\rm d}r$
	and change the order of the integrals. Finally, we have
	\begin{align*}
	\langle Vg,f\rangle_{\pi} & =\int_{0}^{\infty}g(t)\int_{G}f(x)\boldsymbol{1}_{G(t)}(x){\rm d}x\frac{{\rm d}t}{\int_{0}^{\infty}\ell_\rho(r){\rm d}r}\\
	& =\int_{0}^{\infty}g(t)\frac{1}{\lambda_{d}(G(t))}\int_{G(t)}f(x){\rm d}x\frac{\ell_\rho(t){\rm d}t}{\int_{0}^{\infty}\ell_\rho(r){\rm d}r}\\
	& =\int_{0}^{\infty}g(t)(V^{*}f)(t)\mu({\rm d}t)=\langle g,V^{*}f\rangle_{\mu}.
	\end{align*}
	Furthermore, we have $U_{\rho}=VV^{*}$ and $Q_{\rho}=V^{*}V$. Now,
	define $S\colon L_{2}(\mu)\to L_{2}(\pi)$ and $S^{*}\colon L_{2}(\pi)\to L_{2}(\mu)$
	by 
	\begin{align*}
	S(g) & :=\int_{0}^{\infty}g(t)\mu({\rm d}t),\quad g\in L_{2}(\mu),\\
	S^{*}(f) & :=\int_{G}f(x)\pi({\rm d}x),\quad f\in L_{2}(\pi).
	\end{align*}
	Also, note here that $S^{*}$ is the adjoint operator of $S$, as
	well as, $\mathbb{E}_{\pi}=SS^{*}$ and $\mathbb{E}_{\mu}=S^{*}S$. Define
	$R:=V-S$ and the adjoint $R^{*}=V^{*}-S^{*}$. By the fact that also $\mathbb{E}_{\pi}=SV^{*}=VS^{*}$
	we have 
	\begin{align*}
	RR^{*}=(V-S)(V^{*}-S^{*})=VV^{*}-\mathbb{E}_{\pi}=U_\rho-\mathbb{E}_{\pi}.
	\end{align*}
	Similarly, by $ \mathbb{E}_\mu = V^* S = S^* V$ we obtain $R^{*}R=Q_\rho-\mathbb{E}_{\mu}$. Now using the well-known fact, see e.g. \cite[Proposition~2.7]{Co85}, that
	\begin{align*}
	\left\Vert R \right\Vert_{L_{2}(\mu)\to L_{2}(\pi)} = \left\Vert R^{*} \right\Vert_{L_{2}(\pi)\to L_{2}(\mu)}
	\end{align*}
	the statement of the lemma
	follows by
	\begin{align*}
	\left\Vert RR^{*}\right\Vert _{L_{2}(\pi)\to L_{2}(\pi)}= \left\Vert R\right\Vert _{L_{2}(\mu)\to L_{2}(\pi)}^2 = \left\Vert R^{*}\right\Vert _{L_{2}(\pi)\to L_{2}(\mu)}^2 = \left\Vert R^{*}R\right\Vert _{L_{2}(\mu)\to L_{2}(\mu)}
	\end{align*}
	and the definition
	of the spectral gap.
\end{proof}

\begin{rem}
	Similar arguments as in the previous proof have been used in \cite[Section~4.2]{Ul14} in a finite state space setting as well as in \cite{latuszynski2014convergence,RuUl13,RuUl15}.
\end{rem}

Now we argue that the transition kernel $Q_\rho$ (and therefore also the Markov operator) only depends on $\rho$ via its level-set function
$\ell_\rho$. 
\begin{lem}
	\label{lem:Q_depends_on_l}
	For an unnormalized density $\rho \colon G \to \mathbb{R}^+$ 
	we have for any $t\in\mathbb{R}^+$ that
	\begin{align*}
	Q_{\rho}(t,B)=\frac{1}{\ell_\rho(t)}\int_{t}^{\infty}\frac{\lambda_{1}\left(B\cap\left[0,r\right]\right)}{r}{\rm d}(-\ell_\rho)(r), \quad B\in\mathcal{B}(\mathbb{R}^+),
	\end{align*}
	where on the right-hand side we use the Lebesgue-Stieltjes integral w.r.t.~$-\ell_\rho$.
\end{lem}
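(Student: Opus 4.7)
My proof plan is to reduce the integral defining $Q_\rho(t,B)$ to a Lebesgue--Stieltjes integral against $d(-\ell_\rho)$ by pushing Lebesgue measure forward through $\rho$. Observe that the integrand depends on $x$ only through $\rho(x)$: setting $h(r):=\lambda_1(B\cap[0,r])/r$, the abstract change-of-variables formula gives
\[
\int_{G(t)}\frac{\lambda_1(B\cap[0,\rho(x)])}{\rho(x)}\,{\rm d}x \;=\; \int_{[t,\infty)} h(r)\,{\rm d}\nu_t(r),
\]
where $\nu_t$ denotes the pushforward of $\lambda_d|_{G(t)}$ under $\rho$. The measure $\nu_t$ is supported on $[t,\infty)$ since $x\in G(t)$ forces $\rho(x)\geq t$, and it is finite because $\nu_t([t,\infty))=\lambda_d(G(t))=\ell_\rho(t)$, which is finite by integrability of $\rho$.

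The second step identifies $\nu_t$ with $d(-\ell_\rho)$ on $[t,\infty)$. For any $s\geq t$, a direct computation yields
\[
\nu_t([s,\infty)) \;=\; \lambda_d\bigl(\{x\in G(t):\rho(x)\geq s\}\bigr) \;=\; \lambda_d(G(s)) \;=\; \ell_\rho(s),
\]
which matches the tail of $d(-\ell_\rho)$ on $[t,\infty)$. Since the half-lines $[s,\infty)$, $s\geq t$, form a $\pi$-system generating $\mathcal{B}([t,\infty))$ and both measures are finite, the uniqueness of extension implies $\nu_t = d(-\ell_\rho)\big|_{[t,\infty)}$. Dividing by $\lambda_d(G(t))=\ell_\rho(t)$ then yields the claimed formula for $Q_\rho(t,B)$.

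The only delicate point---and the main obstacle I would need to verify carefully---is the bookkeeping at the left endpoint $r=t$. The function $\ell_\rho(s)=\lambda_d(\{\rho\geq s\})$ is left-continuous and may carry an atom of mass $\lambda_d(\{\rho=t\})=\ell_\rho(t)-\ell_\rho(t+)$ at $t$, which must be included in the integration range written $\int_t^\infty$. This matches the pushforward atom $\nu_t(\{t\})=\lambda_d(\{\rho=t\})$ exactly under the convention $\int_t^\infty \equiv \int_{[t,\infty)}$. Beyond this endpoint accounting, the proof contains no analytic difficulty: it is essentially one application of the change-of-variables formula together with the standard correspondence between a monotone function and its Lebesgue--Stieltjes measure.
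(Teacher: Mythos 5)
Your proof is correct and takes essentially the same route as the paper: identify the integral defining $Q_\rho(t,\cdot)$ as a pushforward integral via the change-of-variables formula, then recognize the pushforward of Lebesgue measure under $\rho$ as the Lebesgue--Stieltjes measure of $-\ell_\rho$. The paper verifies the identification by computing $\rho_*\lambda_d\bigl((r,s]\bigr)=-\bigl(\ell_\rho(s+)-\ell_\rho(r+)\bigr)$ on half-open intervals, whereas you use tail sets $[s,\infty)$ together with the $\pi$-system argument; both are standard and equivalent. One small merit of your write-up is that you explicitly account for the possible atom of $\rho_*\lambda_d$ at $r=t$ (stemming from the left-continuity of $\ell_\rho$) and point out that the convention $\int_t^\infty\equiv\int_{[t,\infty)}$ is what makes the boundary bookkeeping consistent; the paper relies on this implicitly.
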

\begin{proof}
	Let $g\colon (t,\ell_\rho(0)) \to \mathbb{R}^+$ with
	$
	g(r)= \lambda_{1}\left(B\cap\left[0,r\right]\right)/r
	$
	and note that the pushforward measure $\rho_*\lambda_d$ on $\mathbb R_+$
	is defined by 
	\[
	\rho_*\lambda_d(B) := \lambda_{d}\circ\rho^{-1}(B)=\lambda_{d}\left(\rho^{-1}(B)\right),
	\qquad B\in\mathcal{B}(\mathbb{R}^+).
	\]
	Hence for any $r,s\in \mathbb{R^+}$ with $r<s$ we have
	\begin{align*}
	\rho_*\lambda_{d}((r,s])
	& =\lambda_{d}\left(\left\{ x\in G(t):r<\rho(x)\leq s\right\} \right)\\
	& =\lambda_{d}\left(\left\{ x\in G(t):r<\rho(x)\right\} \right) - \lambda_{d}\left(\left\{ x\in G(t):s<\rho(x)\right\} \right)\\
	& =-\left(\ell_\rho(s+)-\ell_\rho(r+)\right),
	\end{align*}
	where $\ell_\rho(t+)$ denotes the right limit at $t\in\mathbb R^+$ of the left-continuous level-set function.
	Thus, $\rho_*\lambda_{d}$ is the Lebesgue-Stieltjes measure associated to the monotone non-decreasing function $-\ell_\rho\colon \mathbb R_+ \to (-\infty, 0]$, see, e.g., \cite[Section 1.3.2]{AL06}, and we obtain with a change of variable, see \cite[Theorem 3.6.1, p. 190]{Bog07}, that
	\begin{align*}
	Q_{\rho}(t,B) & =\frac{1}{\ell_\rho(t)}\int_{G(t)}\frac{\lambda_{1}\left(B\cap[0,\rho(x)]\right)}{\rho(x)}{\rm d}x \\
	&= \frac{1}{\ell_\rho(t)} \int_{G(t)}g(\rho(x))\lambda_{d}({\rm d}x) \\
	&= \frac{1}{\ell_\rho(t)} \int_{t}^{\ell_\rho(0)} g(r)\; \rho_*\lambda_{d}({\rm d}r) \\
	&=\frac{1}{\ell_\rho(t)} \int_{t}^{\infty}\frac{\lambda_{1}\left(B\cap\left[0,r\right]\right)}{r}{\rm d}(-\ell_\rho)(r).
	\end{align*}
\end{proof}

\begin{rem}
	For a given $\rho  \colon G \to \mathbb{R}^+$ with continuously differentiable level-set function $\ell_\rho$ the previous result can be stated as
	\begin{align*}
	Q_{\rho}(t,B)=-\frac{1}{\ell_\rho(t)}\int_{t}^{\infty}\frac{\lambda_{1}\left(B\cap\left[0,r\right]\right)}{r}\;\ell_\rho'(r){\rm d} r , \quad B\in\mathcal{B}(\mathbb{R}^+).
	\end{align*}
\end{rem}

An immediate consequence of Lemma~\ref{lem:equal_spectral_gap} and Lemma~\ref{lem:Q_depends_on_l} is the following important result.

\begin{cor}
	\label{cor:dif}
	Let $d,\widetilde{d} \in \mathbb{N}$ and $ G\subseteq \mathbb{R}^d$ as well as $\widetilde  G\subseteq \mathbb{R}^{\widetilde{d}}$. Further let $\rho \colon G \to \mathbb{R}^+$ and $\widetilde \rho \colon \widetilde G \to \mathbb{R}^+$ satisfying $\ell_\rho(t) = \ell_{\widetilde \rho}(t)$ for all $t\in\mathbb{R}^+$.
	Then
	\[
	Q_{\rho}(t,B) = Q_{\widetilde{\rho}}(t,B),\qquad t\in\mathbb{R}^+,\; B\in\mathcal{B}(\mathbb{R}^+).
	\]
	and
	\[
	{\rm gap}_\pi(U_{\rho})={\rm gap}_\mu(Q_{\rho})={\rm gap}_\mu(Q_{\widetilde{\rho}})={\rm gap}_{\widetilde{\pi}}(U_{\widetilde{\rho}}),
	\]
	where $\widetilde{\pi}$ denotes the distribution induced by $\widetilde{\rho}$.
\end{cor}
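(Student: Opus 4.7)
The plan is to chain together the two lemmas that immediately precede the corollary; no new ideas are needed. The first step is to invoke Lemma~\ref{lem:Q_depends_on_l}, which expresses $Q_\rho(t,B)$ entirely in terms of the level-set function $\ell_\rho$ via the Lebesgue--Stieltjes integral against $-\ell_\rho$. Since by hypothesis $\ell_\rho(t)=\ell_{\widetilde\rho}(t)$ for all $t\in\mathbb{R}^+$, substituting this identity into the formula for both kernels yields the first assertion $Q_\rho(t,B)=Q_{\widetilde\rho}(t,B)$ for every $t\in\mathbb{R}^+$ and $B\in\mathcal{B}(\mathbb{R}^+)$.

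Next, I would observe that the reference probability measure $\mu$ on $(\mathbb{R}^+,\mathcal{B}(\mathbb{R}^+))$ is itself defined purely through $\ell_\rho$, so equality of the level-set functions gives $\mu_\rho=\mu_{\widetilde\rho}$; call this common measure $\mu$. In particular, $Q_\rho$ and $Q_{\widetilde\rho}$ extend to the very same self-adjoint operator on the very same Hilbert space $L_2(\mu)$, and the mean functional $\mathbb{E}_\mu$ is also the same in both cases. Therefore ${\rm gap}_\mu(Q_\rho)={\rm gap}_\mu(Q_{\widetilde\rho})$ trivially, as the defining operator norms coincide.

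Finally, I would apply Lemma~\ref{lem:equal_spectral_gap} once to $\rho$ and once to $\widetilde\rho$ (using the respective stationary distributions $\pi$ and $\widetilde\pi$ on $G$ and $\widetilde G$) to obtain ${\rm gap}_\pi(U_\rho)={\rm gap}_\mu(Q_\rho)$ and ${\rm gap}_{\widetilde\pi}(U_{\widetilde\rho})={\rm gap}_\mu(Q_{\widetilde\rho})$. Concatenating these three equalities yields the displayed chain in the corollary.

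There is no real obstacle here; the statement is a direct bookkeeping consequence of the previous two lemmas. The only minor point worth spelling out is that the dimensions $d$ and $\widetilde d$ and the geometric nature of the domains $G,\widetilde G$ drop out completely once one passes from $U_\rho$ to $Q_\rho$, because the auxiliary chain lives on $\mathbb{R}^+$ and its dynamics are driven solely by $\ell_\rho$. Thus the corollary essentially records the striking fact that all the spectral information of simple slice sampling is encoded in the one-dimensional object $\ell_\rho$, which is exactly what will be exploited in the remainder of the section.
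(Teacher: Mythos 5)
Your argument is exactly the paper's: the corollary is stated there as an immediate consequence of Lemma~\ref{lem:equal_spectral_gap} and Lemma~\ref{lem:Q_depends_on_l}, and your chain (equal level-set functions give equal kernels $Q_\rho=Q_{\widetilde\rho}$ and equal stationary measures $\mu$, hence equal spectral gaps on $L_2(\mu)$, then transfer to $U_\rho$ and $U_{\widetilde\rho}$ via Lemma~\ref{lem:equal_spectral_gap}) is precisely the intended reasoning. Nothing to add.
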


Thus, the above corollary tells us that the spectral gap of simple slice sampling is entirely determined by the level-set function $\ell_\rho\colon \mathbb R^+ \to [0,\infty)$ of the (unnormalized) target density $\rho$ and does, for instance, not necessarily depend on the dimension of $G$.
In particular, Corollary \ref{cor:dif} allows us to extend the spectral gap result of Corollary \ref{cor:spec_gap_from_Wass} to much larger classes of target distributions as we explain in detail in the next subsection.

\subsection{Spectral gap result\label{subsec:Estimation-of-the-SG}}
Corollary \ref{cor:dif} implies that the lower bound for the spectral gap of simple slice sampling of rotational invariant and log-concave (unnormalized) target densities also holds for other target densities which share the same level-set function.
Thus, our idea is to identify convenient classes of target densities $\rho\colon G \to [0,\infty)$, with $G \subseteq \mathbb R^d$, which possess the same level-set function as a rotational invariant and log-concave unnormalized density $\widetilde \rho\colon \widetilde G \to [0,\infty)$, with $\widetilde G \subseteq \mathbb R^{\widetilde d}$.
We illustrate this approach first by an example and formalize it rigorously afterwards.

\begin{example} \label{ex:two_peak_ell}
	We consider a bimodal distribution $\pi$ on the set
	\[
	G=(m_0+\mathring{B}^{(d)}_{\sqrt{\log 16}})\cup \mathring{B}^{(d)}_{\sqrt{\log 4}}\subset \mathbb R^d
	\]
	with $m_0 = (5,0,\ldots,0)\in\mathbb{R}^d$ given by the unnormalized density
	\[
	\rho(x)=\max\bigg\{\exp\left(-\frac{1}{2}\left|x\right|^{2}\right),\exp\left(-\frac{1}{4}\left|x-m_{0}\right|^{2}\right)\bigg\}-\frac{1}{2}.
	\]
	Notice that $\rho$ is positive on $G$.
	Here it is worth to mention that in particular in such scenarios an efficient implementation of simple slice sampling is challenging and we are at this point merely interested in theoretical properties.
	By construction, 
	the level sets of $\rho$ consist of two disjoint balls, i.e., we have
	\[
	G(t) = \bigg(m_0+\mathring{B}^{(d)}_{\sqrt{\log (1/2+t)^{-4}}}\bigg)\cup \mathring{B}^{(d)}_{\sqrt{\log (1/2+t)^{-2}}}, \qquad t\in[0,1/2).
	\]
	This leads to
	\[
	\ell_\rho(t) = \big( 2^{d/2} + 4^{d/2} \big) \lambda_d(B^{(d)}_1) \big( \log(1/2+t)^{-1} \big)^{d/2},
	\qquad t\in[0,1/2). 
	\]
	In Figure~\ref{fig:two_peaks_plot} and Figure~\ref{fig:ell_two_peaks_plot} we provide an illustration of $\rho$ and $\ell_\rho$ for $d=2$.
	\begin{figure}
		\begin{minipage}[t]{0.48\columnwidth}%
			\includegraphics[width=\columnwidth]{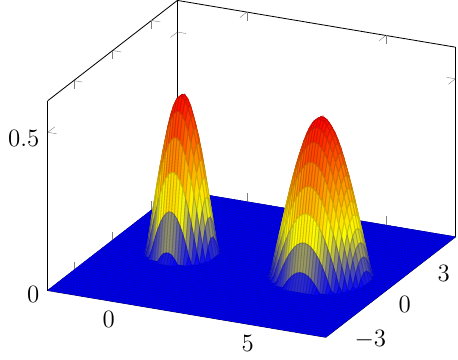}
			\centering{}\caption{Plot of $\rho$ from Example~\ref{ex:two_peak_ell} for $d=2$.}
			\label{fig:two_peaks_plot}
		\end{minipage}\hfill{}%
		\begin{minipage}[t]{0.48\columnwidth}
			\includegraphics[width=\columnwidth]{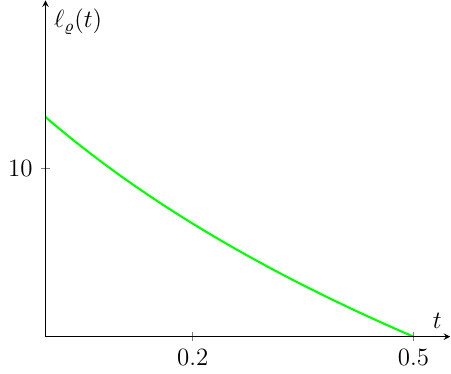}
			\caption{Plot of $\ell_\rho$ of Example~\ref{ex:two_peak_ell} for $d=2$.}%
			\label{fig:ell_two_peaks_plot}
		\end{minipage}
	\end{figure}
	Straightforwardly one obtains the inverse of $\ell_\rho$ given by $\ell_\rho^{-1}\colon (0,\ell_\rho(0)) \to (0,1/2)$ with
	\[
	\ell^{-1}_\rho(s) = \exp\bigg(-\bigg( \frac{s}{(2^{d/2}+2^d)\lambda_d\big( B^{(d)}_1 \big)} \bigg)^{2/d}\bigg) - 1/2.
	\]
	Now, for $k\in\mathbb{N}$ 
	we can define rotational invariant unnormalized densities
	\[
	\widetilde \rho^{(k)} \colon B^{(k)}_{(\ell_\rho(0)/\lambda_k(B^{(k)}_1))^{1/k}} \to (0,\infty)
	\]
	by
	\[
	\widetilde \rho^{(k)}(y) := \ell_\rho^{-1} (\lambda_k(B^{(k)}_1)\vert y \vert^k)
	\]
	which have the same level-set function as $\rho$, i.e., $\ell_\rho(t)=\ell_{\widetilde \rho^{(k)}}(t)$ for all $t\in(0,1/2)$. 
	Note that the dimension of the domain of $\widetilde \rho^{(k)}$ is $k$, whereas for $\rho$ it is $d$ and $d$ does not need to coincide with $k$. 
	In Figure~\ref{fig:tilde_rho_k1} and Figure~\ref{fig:tilde_rho_k2} we display $\widetilde \rho^{(k)}$ for $k=1$, $k=2$ and $d=2$.
	By Corollary~\ref{cor:dif} we can conclude that the spectral gaps of $U_\rho$ and $U_{\widetilde \rho^{(k)}}$ are the same. 
	Moreover, the auxiliary densities $\widetilde \rho^{(k)}$ are of the form $\widetilde \rho^{(k)}(x)=\exp(- \varphi_k(|x|))$ on their domain, where 
	\[
	\varphi_k(s):=-\log \ell^{-1}(s^k)=-\log \bigg(\exp\bigg(-\bigg( \frac{s^k}{(2^{d/2}+2^d)\lambda_d\big( B^{(d)}_1 \big)} \bigg)^{2/d}\bigg) - 1/2\bigg)
	\]
	for all $s\in \big[0,(\ell_\rho(0)/\lambda_k(B^{(k)}_1)^{1/k}\big)$. Thus, for $k \geq \lceil \frac d2\rceil$ the function $\varphi_k$ is strictly increasing and convex, i.e., the unnormalized density $\widetilde \rho^{(k)}$ satisfies the assumptions of Theorem \ref{thm:WD_log-concave} and Corollary \ref{cor:spec_gap_from_Wass}, respectively.
	Hence, we can conclude that simple slice sampling of the bimodal target $\pi$ on $\mathbb R^d$ given by $\rho$ has a spectral gap of at least
	\[
	{\rm gap}_\pi\left(U_{\rho}\right)\geq\frac{1}{\lceil \frac d2\rceil+1}.
	\]

	\begin{figure}
		\begin{minipage}[t]{0.48\columnwidth}%
			\includegraphics[width=\columnwidth]{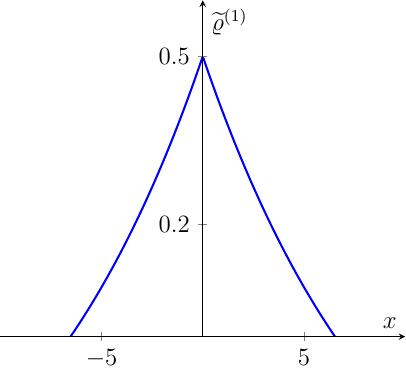}
			\centering{}\caption{Plot of $\widetilde \rho^{(1)}$.}
			\label{fig:tilde_rho_k1}
		\end{minipage}\hfill{}%
		\begin{minipage}[t]{0.48\columnwidth}
			\includegraphics[width=\columnwidth]{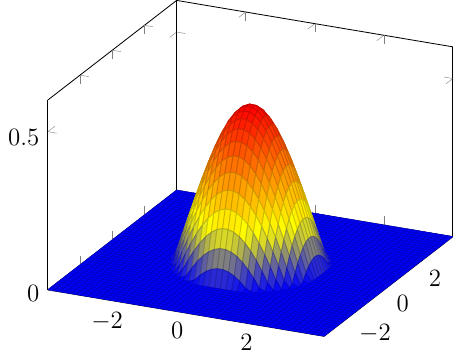}
			\caption{Plot of $\widetilde \rho^{(2)}$.}%
			\label{fig:tilde_rho_k2}
		\end{minipage}
	\end{figure}
\end{example}

The previous example suggests the definition of the following classes of level-set functions.

\begin{defi}
	\label{def:exp}
	A continuous function $\ell\colon (0,\infty) \to [0,\infty]$ belongs to the class $\Lambda_k$ with $k\in\mathbb{N}$ if
\end{defi}
\begin{enumerate}
	\item $\ell$ is strictly decreasing on its open support 
	\[
	\supp \ell :=\left(0,\sup\big\{t\in(0,\infty)\mid \ell(t)> 0 \big\}\right),
	\]
	which implies the existence of the inverse $\ell^{-1}$ on $\ell(\supp \ell) = (0,  \|\ell\|_\infty)$ with
	\[
	\|\ell\|_\infty := \sup_{s \in (0,\infty)
	} = \lim_{t\to 0+} \ell(t) = \ell(0+),
	\]
	\item the function $g\colon \big( 0,  \|\ell\|_\infty^{1/k} \big) \to \supp \ell$ given by $g(s) := \ell^{-1}(s^k)$ 
	is log-concave, that is, $\log g$ is concave.
\end{enumerate}

The main result of this section is then as follows:

\begin{thm}
	\label{thm:exp}
	For an unnormalized density $\rho\colon G\to \mathbb{R}^+$ assume that its level-set function $\ell_\rho \in \Lambda_k$ for $k\in\mathbb{N}$. Then
	\[
	{\rm gap}_\pi\left(U_{\rho}\right)\geq\frac{1}{k+1}.
	\]
\end{thm}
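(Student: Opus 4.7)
The plan is to reduce Theorem~\ref{thm:exp} to Corollary~\ref{cor:spec_gap_from_Wass} via Corollary~\ref{cor:dif}. Concretely, starting from $\rho$ with $\ell_\rho \in \Lambda_k$, I would construct a rotationally symmetric log-concave auxiliary unnormalized density $\widetilde\rho$ on a $k$-dimensional Euclidean ball whose level-set function coincides with $\ell_\rho$. Then Corollary~\ref{cor:spec_gap_from_Wass} yields ${\rm gap}_{\widetilde\pi}(U_{\widetilde\rho}) \geq 1/(k+1)$, and Corollary~\ref{cor:dif} immediately transfers this lower bound to ${\rm gap}_\pi(U_\rho)$. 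This is exactly the strategy used in Example~\ref{ex:two_peak_ell}, carried out here in full generality.

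For the construction, write $\ell := \ell_\rho$, set $\widetilde R := (\|\ell\|_\infty/\lambda_k(B^{(k)}_1))^{1/k} \in (0,\infty]$, and define
\[
\widetilde\rho \colon \mathring B^{(k)}_{\widetilde R} \to (0,\infty), \qquad \widetilde\rho(y) := \ell^{-1}\bigl(\lambda_k(B^{(k)}_1)\,|y|^k\bigr).
\]
Since $\ell^{-1}$ is strictly decreasing on $(0,\|\ell\|_\infty)$, the function $\widetilde\rho$ is rotationally symmetric and strictly decreasing in $|y|$, so its level sets are the balls $\widetilde G(t) = B^{(k)}_{(\ell(t)/\lambda_k(B^{(k)}_1))^{1/k}}$; hence $\ell_{\widetilde\rho}(t) = \ell(t)$ for every $t > 0$. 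Integrability of $\widetilde\rho$ then follows from the layer-cake identity $\int \widetilde\rho\,{\rm d}y = \int_0^\infty \ell_{\widetilde\rho}(t)\,{\rm d}t = \int_0^\infty \ell_\rho(t)\,{\rm d}t = \int_G \rho\,{\rm d}x < \infty$.

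Next, to check the hypotheses of Corollary~\ref{cor:spec_gap_from_Wass}, I would write $\widetilde\rho(y) = \exp(-\widetilde\varphi(|y|))$ with
\[
\widetilde\varphi(s) := -\log \ell^{-1}\bigl(\lambda_k(B^{(k)}_1)\,s^k\bigr) = -\log g(\alpha s),
\]
where $g(s) := \ell^{-1}(s^k)$ is the function from Definition~\ref{def:exp} and $\alpha := \lambda_k(B^{(k)}_1)^{1/k}$. Strict monotonicity of $\widetilde\varphi$ is immediate from strict decrease of $\ell^{-1}$, while convexity of $\widetilde\varphi$ is precisely the log-concavity of $s \mapsto g(\alpha s)$, a trivial consequence of the log-concavity of $g$ postulated in the definition of $\Lambda_k$. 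Thus $\widetilde\varphi$ fits the framework of Theorem~\ref{thm:WD_log-concave}, Corollary~\ref{cor:spec_gap_from_Wass} delivers ${\rm gap}_{\widetilde\pi}(U_{\widetilde\rho}) \geq 1/(k+1)$, and Corollary~\ref{cor:dif} closes the argument.

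The main obstacle I anticipate is purely a matter of boundary regularity: for Corollary~\ref{cor:spec_gap_from_Wass} to apply, $\widetilde\varphi$ must take real values on $[0,\widetilde R)$, which forces $\widetilde\varphi(0) = -\log \sup\supp\ell$ to be finite, i.e., $\rho$ to be bounded. Under this mild regularity hypothesis---satisfied in all examples the paper considers---the strategy above runs cleanly; otherwise one would need either a minor extension of Theorem~\ref{thm:WD_log-concave} allowing $\widetilde\varphi(0) = -\infty$, or a truncation-plus-limit argument to carry the spectral gap bound through.
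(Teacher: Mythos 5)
Your proof is correct and follows essentially the same route as the paper: construct the rotationally symmetric auxiliary density $\widetilde\rho^{(k)}(y)=\ell_\rho^{-1}(\lambda_k(B_1^{(k)})|y|^k)$ on a $k$-dimensional ball of radius $(\|\ell_\rho\|_\infty/\lambda_k(B_1^{(k)}))^{1/k}$, verify it shares the level-set function $\ell_\rho$ and satisfies the hypotheses of Theorem~\ref{thm:WD_log-concave}, then transfer the resulting spectral gap bound back to $U_\rho$ via Corollary~\ref{cor:dif}. The boundary caveat you raise (that $\widetilde\varphi(0)=-\log\|\rho\|_\infty$ must be finite for Theorem~\ref{thm:WD_log-concave} to apply verbatim) is a legitimate observation, but note it applies equally to the paper's own argument, which does not address the case $\|\rho\|_\infty=\infty$ explicitly either.
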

\begin{proof}
	The idea here is to construct an unnormalized density $\widetilde \rho^{(k)} \colon \mathbb{R}^k \to  \mathbb{R}^+$ such that $\ell_{\rho} = \ell_{\widetilde \rho^{(k)}}$ and $\widetilde \rho^{(k)}$ satisfies the assumptions of Theorem~\ref{thm:WD_log-concave}. 
	The statement then follows by Corollary~\ref{cor:spec_gap_from_Wass} and Corollary \ref{cor:dif}. 
	To this end, we define $\widetilde{\rho}^{(k)} \colon \mathring{B}_{R_k}^{(k)} \to \mathbb{R}^+$ with ${R_k := \left(\Vert \ell_\rho \Vert_\infty / \lambda_k(B_1^{(k)}) \right)^{1/k}}$ by
	\[
	\widetilde{\rho}^{(k)} (x) := \ell_\rho^{-1}\left(\lambda_k(B_1^{(k)})\vert x \vert^k \right),
	\qquad
	|x| < R_k.
	\]
	By construction we have for any $t\in (0,\infty)$
	\[
	\ell_{\widetilde{\rho}^{(k)}}(t) = 
	\lambda_{k}\left(\left\{ x\in\mathbb{R}^{k}:\widetilde\rho^{(k)}(x)\geq t\right\} \right) =\ell_\rho(t).
	\]
	Next, we observe that $\widetilde \rho^{(k)}(x)=\exp(-\varphi_k\left(\left|x\right|\right))$ for $|x|<R_k$ with
	\[
	\varphi_k(s):=-\log\ell_\rho^{-1}\left(\lambda_k(B_1^{(k)}) s^{k}\right),
	\qquad
	s \in [0, R_k).
	\]
	Since $\ell_\rho$ belongs to $\Lambda_k$, we know that $s \mapsto \log \ell_\rho^{-1}\left(s^{k}\right)$ is concave.
	This yields the convexity of $\varphi_k$ on $[0, R_k)$.
	Moreover, $\ell_\rho \in \Lambda_k$ implies that also $\ell_\rho^{-1}$ is strictly decreasing on $[0, \|\ell_\rho\|_\infty)$.
	Thus, the mapping $s \mapsto \log \ell_\rho^{-1}\left(s^{k}\right)$ is strictly decreasing and, therefore, $\varphi_k$ is strictly increasing.
	Hence, the unnormalized density $\widetilde{\rho}^{(k)}$ satisfies the assumptions of Theorem~\ref{thm:WD_log-concave} which finishes the proof.
\end{proof}

Notice that the lower the number $k$ of the class $\Lambda_{k}$ the larger the lower bound of the spectral gap. 
Subsequently, we provide some (sufficient) characterizations of the classes $\Lambda_k$.

\subsubsection{Properties of the class $\Lambda_{k}$}

The requirements of a level-set function to belong to the class $\Lambda_{k}$ are not easy to check. We provide some auxiliary tools. The following is a trivial consequence of the definition of $\Lambda_k$.

\begin{prop} \label{prop:constant_ell}
	If $\ell\in\Lambda_k$ for $k\in\mathbb{N}$ and $c>0$, then $c\cdot \ell \in \Lambda_k$. 
\end{prop}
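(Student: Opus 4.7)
\medskip

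\noindent\textbf{Proof proposal.} The plan is to simply unpack the definition of $\Lambda_k$ for the rescaled function $c\ell$ and reduce everything back to the corresponding property of $\ell$. There are three items to verify: continuity and strict monotonicity on the open support, and log-concavity of the associated function $g$ from Definition~\ref{def:exp}.

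First I would observe that multiplication by the positive constant $c$ preserves continuity, the set $\{t\in(0,\infty)\mid \ell(t)>0\}$, and hence $\supp(c\ell)=\supp \ell$, and also strict monotonicity on that open support. In particular $\|c\ell\|_\infty = c\|\ell\|_\infty$ and the inverse of $c\ell$ on $(0,c\|\ell\|_\infty)$ is given by $(c\ell)^{-1}(u) = \ell^{-1}(u/c)$.

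The main (and only substantive) step is to check the log-concavity of
\[
\widetilde g\colon \bigl(0,\|c\ell\|_\infty^{1/k}\bigr)\to \supp \ell,
\qquad \widetilde g(s) := (c\ell)^{-1}(s^k) = \ell^{-1}\!\bigl(s^k/c\bigr).
\]
Writing $s = c^{1/k}\, r$ we get $\widetilde g(s) = \ell^{-1}(r^k) = g(r) = g\bigl(c^{-1/k} s\bigr)$, so $\widetilde g$ is obtained from the original $g$ by the affine change of variables $s \mapsto c^{-1/k} s$ on $(0,\|\ell\|_\infty^{1/k})$. Since $\log \widetilde g(s) = \log g(c^{-1/k} s)$ is the composition of a concave function with an increasing affine map, it remains concave. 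Therefore $\widetilde g$ is log-concave, and all conditions of Definition~\ref{def:exp} hold for $c\ell$, which establishes $c\ell \in \Lambda_k$.

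No real obstacle is anticipated; the only thing to be careful about is matching the domains $(0,\|c\ell\|_\infty^{1/k})$ and $\supp(c\ell) = \supp \ell$ correctly after the rescaling.
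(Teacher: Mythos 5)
Your argument is correct and is exactly the verification the paper has in mind; the authors simply declare the proposition ``a trivial consequence of the definition'' and omit a written proof. The key observation — that $(c\ell)^{-1}(s^k) = \ell^{-1}(s^k/c) = g(c^{-1/k}s)$, so the associated $\widetilde g$ is a precomposition of $g$ with an increasing affine map, which preserves log-concavity — is spelled out cleanly and the domain bookkeeping ($\|c\ell\|_\infty^{1/k} = c^{1/k}\|\ell\|_\infty^{1/k}$) matches.
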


Now a sufficient condition for being in $\Lambda_1$ is stated.

\begin{prop}
	If $\ell\colon (0,\infty) \to [0,\infty)$ is strictly decreasing and concave, then $\ell\in\Lambda_{1}.$
\end{prop}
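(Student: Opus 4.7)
The plan is to verify the two defining conditions of $\Lambda_1$ directly. The first condition---strict monotonicity on $\supp\ell$---is immediate from the hypothesis that $\ell$ is strictly decreasing on all of $(0,\infty)$, and continuity is either assumed or automatic, since a real-valued concave function on the open interval $(0,\infty)$ is continuous. What remains is to verify that, for $k=1$, the inverse $g(s) = \ell^{-1}(s)$ is log-concave on $(0,\|\ell\|_\infty)$.

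My strategy is to first show that $\ell^{-1}$ is itself concave on $(0,\|\ell\|_\infty)$, and then invoke the elementary fact that every positive concave function is log-concave (because $\log$ is increasing and concave, its composition with a concave function is concave). For the concavity of $\ell^{-1}$, I fix $s_1,s_2 \in (0,\|\ell\|_\infty)$ and $\theta \in [0,1]$ and set $t_i := \ell^{-1}(s_i) \in \supp\ell$. Since $\supp\ell$ is an interval, the convex combination $\theta t_1 + (1-\theta)t_2$ lies in $\supp\ell \subset (0,\infty)$, and concavity of $\ell$ gives
\[
\ell\bigl(\theta t_1 + (1-\theta) t_2\bigr) \;\geq\; \theta \ell(t_1)+(1-\theta)\ell(t_2) \;=\; \theta s_1 + (1-\theta)s_2 .
\]
Because $\ell$ is strictly decreasing on $\supp\ell$, the inverse $\ell^{-1}$ is strictly decreasing on $(0,\|\ell\|_\infty)$. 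Applying $\ell^{-1}$ to both sides reverses the inequality and yields
\[
\theta\, \ell^{-1}(s_1) + (1-\theta)\,\ell^{-1}(s_2) \;\leq\; \ell^{-1}\bigl(\theta s_1 + (1-\theta)s_2\bigr),
\]
which is precisely the concavity of $\ell^{-1}$. Positivity of $\ell^{-1}$ is clear since its image lies in $\supp\ell \subset (0,\infty)$, so $\log \circ \ell^{-1}$ is well defined and concave as claimed.

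I do not anticipate any real obstacle here; the whole argument reduces to two well-known observations---a strictly decreasing concave function has a concave inverse, and a positive concave function is log-concave---together with a bit of bookkeeping about domains. The only minor care is to ensure all convex combinations remain in $\supp\ell$ so that the concavity inequality for $\ell$ can be applied, which is automatic because $\supp\ell$ is an interval.
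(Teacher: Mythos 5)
Your proof is correct and follows exactly the same route as the paper's (very terse) argument: a strictly decreasing concave function has a concave inverse, and a positive concave function is log-concave. You merely supply the elementary details that the paper leaves implicit.

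One remark you might add for completeness: as written, a function $\ell\colon(0,\infty)\to[0,\infty)$ that is strictly decreasing and concave on all of $(0,\infty)$ cannot exist (a concave function lying below a strictly decreasing supporting line would eventually become negative); the statement is to be read as concavity and strict monotonicity on $\supp\ell$, which is the set on which your argument in fact operates. This is implicit in both your proof and the paper's.
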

\begin{proof}
	Since $\ell$ is strictly decreasing and concave we have that $\ell^{-1}$ is concave. Then $\log\ell^{-1}$
	is log-concave and $\ell\in\Lambda_{1}$.
\end{proof}

Assuming smoothness of $\ell$ the previous result can be extended and provides a characterisation of $\Lambda_k$.

\begin{prop}\label{prop:L_k_property}
	Let $\ell\colon (0,\infty) \to [0,\infty)$ be continuously differentiable on its open support $\supp \ell$ with $\ell'(t)<0$.
	Define the function $\psi\colon \supp \ell \to [0,\infty)$ by $\psi(t):=\frac{t \ell'(t)}{\ell(t)^{1-1/k}}$ for $k\in\mathbb{N}$. Then
	\[
	\ell \in \Lambda_k \quad\Longleftrightarrow\quad \psi \text{  is decreasing}.
	\]
\end{prop}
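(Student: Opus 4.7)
The plan is to translate the concavity of $\log g$ into a monotonicity condition on $\psi$ by directly computing $(\log g)'$ and then composing with $g$. The argument is essentially a chain-rule calculation together with the inverse function theorem; the only subtlety is a double sign flip along the way.

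First, I would check that $g\colon \bigl(0,\|\ell\|_\infty^{1/k}\bigr) \to \supp \ell$ is continuously differentiable and strictly decreasing. Since $\ell$ is $C^1$ on $\supp \ell$ with $\ell'(t) < 0$, the inverse function theorem yields that $\ell^{-1}$ is $C^1$ and strictly decreasing on $\bigl(0,\|\ell\|_\infty\bigr)$ with $(\ell^{-1})'(u) = 1/\ell'(\ell^{-1}(u))$. Composing with the increasing map $s \mapsto s^k$ gives the same for $g$, together with
\[
 g'(s) \;=\; \frac{k s^{k-1}}{\ell'(g(s))}.
\]

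Second, using $\ell(g(s)) = s^k$ to substitute $s^{k-1} = \ell(g(s))^{1-1/k}$, I obtain
\[
 (\log g)'(s) \;=\; \frac{g'(s)}{g(s)} \;=\; \frac{k\,\ell(g(s))^{1-1/k}}{g(s)\,\ell'(g(s))} \;=\; \frac{k}{\psi(g(s))}.
\]
Note that $\psi(t)$ is in fact strictly negative on $\supp \ell$ (so the stated codomain of $\psi$ should be read as $(-\infty,0)$, or the definition as carrying a minus sign), because $t,\ell(t) > 0$ and $\ell'(t) < 0$. This sign matters for the next step.

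Third, by the standard criterion for $C^1$ functions, $\log g$ is concave on $\bigl(0,\|\ell\|_\infty^{1/k}\bigr)$ if and only if its derivative $s \mapsto k/\psi(g(s))$ is decreasing there. Because $\psi < 0$, the map $y \mapsto k/y$ is decreasing on $(-\infty,0)$, so this is equivalent to $s \mapsto \psi(g(s))$ being increasing. Since $g$ is a strictly decreasing bijection of $\bigl(0,\|\ell\|_\infty^{1/k}\bigr)$ onto $\supp \ell$, this is in turn equivalent to $\psi$ being decreasing on $\supp \ell$, which gives the claimed equivalence. The main obstacle is only bookkeeping the two sign flips (once from $\psi < 0$, once from $g$ being decreasing); otherwise the proof is a routine chain-rule computation.
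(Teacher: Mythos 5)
Your proof is correct and follows essentially the same route as the paper's: both reduce the log-concavity of $g$ to the monotonicity of $(\log g)'$ via the chain rule and the inverse function theorem, then undo the composition with the decreasing bijection $g$ (the paper phrases this through $\varphi_k = -\log g$ and the positive auxiliary function $h(t)=-1/\psi(t)$, but this is the same sign bookkeeping). Your remark that $\psi$ actually takes values in $(-\infty,0)$ rather than $[0,\infty)$ is a correct observation about a typo in the stated codomain.
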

\begin{proof}
	The function $\ell$ is strictly decreasing on $\supp \ell$, since $\ell'(t)<0$ on that interval.
	This implies that the inverse $\ell^{-1}\colon [0, \|\ell\|_\infty) \to \supp \ell$ exists and is strictly decreasing.
	Define the function $\varphi_k\colon [0, \|\ell\|_\infty^{1/k}) \to \mathbb{R}$ with 
	$\varphi_{k}(s):=-\log\ell^{-1}(s^{k})$. 
	Observe that $\varphi_k$ is strictly increasing and by the inverse mapping theorem continuously differentiable on $\supp \ell$.
	We have 
	\begin{align*}
	\varphi_{k}'(s) & =-\frac{{\rm d}}{{\rm d}s}\log\ell^{-1}(s^{k})=-\frac{1}{\ell^{-1}(s^{k})}\ \left(\frac{{\rm d}}{{\rm d}s}\ell^{-1}(s^{k})\right)=-\frac{1}{\ell^{-1}(s^{k})}\ \frac{ks^{k-1}}{\ell'(\ell^{-1}(s^{k}))}.
	\end{align*}
	Given the assumptions we have that $\ell\in \Lambda_k$ if and only if $\varphi_k$ is convex.
	The latter is equivalent to $\varphi_{k}'$ being increasing. 
	Note that for $s \in  [0, \|\ell\|_\infty^{1/k})$
	\begin{align*}
	\frac{ks^{k-1}}{\ell'(\ell^{-1}(s^{k}))} & =k\frac{s^{k}}{s\ell'(\ell^{-1}(s^{k}))}=k\frac{\ell(\ell^{-1}(s^{k}))}{s\ell'(\ell^{-1}(s^{k}))}=k\frac{\ell(\ell^{-1}(s^{k}))}{\left(\ell(\ell^{-1}(s^{k}))\right)^{1/k}\ \ell'(\ell^{-1}(s^{k}))}\\
	& =k\frac{\left(\ell(\ell^{-1}(s^{k}))\right)^{1-1/k}}{\ell'(\ell^{-1}(s^{k}))}.
	\end{align*}
	Hence, with $h(t):=-\frac{\ell^{1-1/k}(t)}{t \ell'(t)}$ we obtain
	$
	\varphi_{k}'(s)=k\cdot h(\ell^{-1}(s^{k})),
	$
	which leads to the fact that
	\[
	\varphi_{k}'\text{ increasing}\quad\Longleftrightarrow\quad h\text{ decreasing}.
	\]
	However, the latter is equivalent to the fact that the mapping $t\mapsto\frac{t\ell'(t)}{\ell(t)^{1-1/k}}$
	is decreasing, since $\frac{\ell(t)^{1-1/k}}{t\ell'(t)}<0$ on $\supp \ell$.
\end{proof}
\begin{rem}
	Roberts and Rosenthal \cite{RoRo99} derived convergence results
	of simple slice sampling given the assumption that $t\mapsto t\ell'(t)$ is decreasing which corresponds to the sufficient condition for $\ell \in \Lambda_1$.
	In particular, they write ``However, it is surprising that
	this same bound\,\footnote{They provide a quantitative bound of $\Vert U_\rho^n(x,\cdot) -\pi(\cdot) \Vert_{\rm tv}$ for any continuously differentiable $\ell$ as in Proposition~\ref{prop:L_k_property}.} applies to any density $\rho$ such that $t \ell'(t)$
	is non-increasing''\footnote{For the formulas we adapted their statement to our notation, namely in their work our $\rho$ is $\pi$ and our $\ell$ is denoted by $Q$.}. 
	We also observe this surprising fact, but w.r.t. the spectral gap.
	In contrast to their result, in general,
	we do not require the existence of the first derivative from the level-set function. 
	Moreover, our result for $\Lambda_{k}$ with $k>1$ has no analogues in the work of Roberts and Rosenthal. 
	To emphasize this we consider in Section~\ref{sec:Examples}
	an example of a level-set function which is in $\Lambda_{2}$ but not in $\Lambda_{1}$.
\end{rem}

\subsubsection{Further examples\label{sec:Examples}}

We illustrate in two more examples
the advantages of Theorem~\ref{thm:exp} compared to Theorem~\ref{thm:WD_log-concave}.

\begin{example}  \label{ex:exponential}
	For $\alpha>0$ and $\gamma>0$ let $\rho^{(d)}\colon \mathbb{R}^d \to \mathbb{R}^+$ be given by $\rho^{(d)}(x) = \exp(-\alpha \vert x \vert ^\gamma)$. By Proposition~\ref{prop:constant_ell} it is sufficient to consider
	\[
	\ell(t):=\left(\frac{\log t^{-1}}{\alpha}\right)^{d/\gamma} = c_1 \, \ell_{\rho^{(d)}}(t),\qquad t\in(0,\infty),
	\]
	with $c_1 = \lambda_d(B_1^{(d)})$.	
	The function $\ell$ is strictly decreasing and $\log\ell^{-1}(s^{k})=-\alpha s^{\gamma\frac{k}{d}}$.
	Thus, for any $\gamma\geq1$ and $k=d$ it is concave on $(0,\infty)$, such that for this parameters
	$\ell\in\Lambda_{d}$ and by Theorem~\ref{thm:exp}
	\[
	{\rm gap}\left(U_{\rho^{(d)}}\right)\geq\frac{1}{d+1}.
	\]
	However, we notice that $\log\ell^{-1}(s^{k})=-\alpha s^{\gamma k/d}$ is concave for $k\geq \lceil d/\gamma \rceil$.
	Otherwise, for $k < \lceil d/\gamma \rceil$ it is convex. 
	Thus, we have that $\ell_\rho \in \Lambda_{\lceil d/\gamma \rceil}$ but if $d<\gamma$, then   $\ell_\rho \notin \Lambda_{\lceil d/\gamma \rceil-1}$.
	For instance, for $\gamma =d/2$ we have that $\ell_\rho \in \Lambda_{2}$ and $\ell_\rho \notin \Lambda_{1}$.
	Hence, 
	Theorem~\ref{thm:exp} tells us that for this class of target densities
	\[
	{\rm gap}\left(U_{\rho^{(d)}}\right)\geq\frac{1}{\left\lceil d/\gamma\right\rceil +1}\geq\frac{1}{d+1}.
	\] 
\end{example}

In the following we consider a ``volcano'' density.

\begin{example} \label{ex:volcano}
	Let $\rho^{(d)}\colon \mathbb{R}^d \to \mathbb{R}^+$ be given by  $\rho^{(d)}(x)=e^{-|x|^{2d}+2|x|^{d}}$.
	In contrast to Example~\ref{ex:exponential} here we have more than a single peak.
	For $d=2$ the density is plotted in Figure~\ref{fig: d2_volcano}.
	\begin{figure}
		\begin{minipage}[t]{0.46\columnwidth}
			\includegraphics[width=\columnwidth]{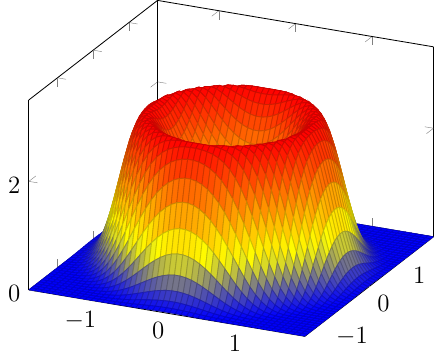}
			\caption{Plot of \mbox{$\rho^{(2)}(x)=e^{-|x|^{4}+2|x|^{2}}$.}}
			\label{fig: d2_volcano}
		\end{minipage}
		\hspace{0.08\columnwidth}
		\begin{minipage}[t]{0.45\columnwidth}%
			\includegraphics[width=\columnwidth]{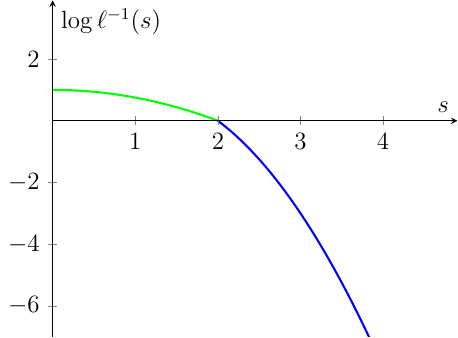}
			\caption{Plot of $\log \ell^{-1}(s)$ from Example~\ref{ex:volcano}. }
			\label{fig: log_ell_volcano}
		\end{minipage}\hfill{}%
	\end{figure}
	It is easy to see that $\ell_{\rho^{(d)}}$ is proportional to 
	the strictly decreasing function $\ell \colon (0,\infty)\to [0,\infty)$ given by
	\[
	\ell(t):=\begin{cases}
	1+\sqrt{1+\log t^{-1}}, & 0<t\leq1,\\
	2\sqrt{1+\log t^{-1}}, & 1<t\leq e,
	\end{cases}
	\]
	such that by Proposition~\ref{prop:constant_ell} it is sufficient to consider $\ell$.
	This leads to 
	\[
	\ell^{-1}(s)=\begin{cases}
	e^{1-\frac{s^{2}}{4}}, & 0\leq s<2,\\
	e^{-s^{2}+2s}, & s\geq2,
	\end{cases}
	\]
	and we have that $\log \ell^{-1}(s)$ is concave, see also Figure~\ref{fig: log_ell_volcano}. Hence $\ell\in \Lambda_1$ for arbitrary $d$ and Theorem~\ref{thm:exp} implies
	\[
	{\rm gap}_{\pi}(U_{\rho^{(d)}}) \geq \frac{1}{2}.
	\]
\end{example}

\section*{Acknowledgements}
Viacheslav Natarovskii thanks the DFG Research Training Group 2088 for their support.
Daniel Rudolf thanks Andreas Eberle for fruitful discussions on this topic and acknowledges support of the Felix-Bernstein-Institute for Mathematical Statistics in the Biosciences (Volkswagen Foundation) and the Campus laboratory AIMS. Bj\"orn Sprungk thanks the DFG for supporting this research within project 389483880 and the Research Training Group 2088.

%

\begin{thebibliography}{4}
	\bibitem{AL06}
	\textsc{Athreya, K.~B.} and \textsc{Lahiri, S.~N.} (2006). \textit{Measure theory and probability theory},
	Springer Texts in Statistics, Springer, New York.
	
	\bibitem{besag1993spatial}
	\textsc{Besag, J.} and \textsc{Green, P.} (1993). \textit{Spatial statistics and {B}ayesian computation}, J.
	Roy. Statist. Soc. Ser. B, 25--37.
	
	\bibitem{Bog07}
	\textsc{Bogachev, V.~I.} (2007). \textit{Measure theory}, vol.~I, Springer-Verlag Berlin
	Heidelberg.
	
	\bibitem{ChWa94}
	\textsc{Chen, M.} and \textsc{Wang, F.} (1994). \textit{Application of coupling method to the first
		eigenvalue on manifold}, Sci. China Ser. A \textbf{37}, no.~1, 1--14.
	
	\bibitem{Co85}
	\textsc{Conway, J.~B.} (1985). \textit{A course in functional analysis}, Springer Verlag, New
	York.
	
	\bibitem{flegal2010batch}
	\textsc{Flegal, J.} and \textsc{Jones, G.} (2010). \textit{{Batch means and spectral variance estimators in
			Markov chain Monte Carlo}}, Ann. Statist. \textbf{38}, no.~2,
	1034--1070.
	
	\bibitem{Hi98}
	\textsc{Higdon, D.} (1998). \textit{Auxiliary variable methods for {M}arkov chain {M}onte {C}arlo
		with applications}, J. Amer. Statist. Assoc. \textbf{93}, no.~442,
	585--595.
	
	\bibitem{kipnis1986central}
	\textsc{Kipnis, C.} and \textsc{Varadhan, S.} (1986). \textit{{Central limit theorem for additive
			functionals of reversible Markov processes and applications to simple
			exclusions}}, Communications in Mathematical Physics \textbf{104},
	no.~1, 1--19.
	
	\bibitem{kontoyiannis2009geometric}
	\textsc{Kontoyiannis, I.} and \textsc{Meyn, S.} (2012). \textit{Geometric ergodicity and the spectral gap of
		non-reversible markov chains}, Probability Theory and Related Fields
	\textbf{154}, no.~1-2, 327--339.
	
	\bibitem{latuszynski2014convergence}
	\textsc{{\L}atuszy{\'n}ski, K.} and \textsc{Rudolf, D.} (2014). \textit{Convergence of hybrid slice sampling
		via spectral gap}, arXiv preprint arXiv:1409.2709.
	
	\bibitem{mira2001perfect}
	\textsc{Mira, A.}, \textsc{M{\o}ller, J.} and \textsc{Roberts, G.} (2001). \textit{Perfect slice samplers}, J. Roy.
	Statist. Soc. Ser. B \textbf{63}, no.~3, 593--606.
	
	\bibitem{MiTi02}
	\textsc{Mira, A.} and \textsc{Tierney, L.} (2002). \textit{Efficiency and convergence properties of slice
		samplers}, Scand. J. Statist. \textbf{29}, no.~1, 1--12.
	
	\bibitem{muller2013slice}
	\textsc{Muller, O.}, \textsc{Yang, M.~Y.} and \textsc{Rosenhahn, B.} (2013). \textit{Slice sampling particle belief
		propagation}, Proceedings of the IEEE International Conference on Computer
	Vision, pp.~1129--1136.
	
	\bibitem{MuAdMa10}
	\textsc{Murray, I.}, \textsc{Adams, R.} and \textsc{MacKay D.} (2010). \textit{Elliptical slice sampling}, Journal
	of Machine Learning Research: W\&CP \textbf{9}, 541--548.
	
	\bibitem{Ne03}
	\textsc{Neal, R.} (2003). \textit{Slice sampling}, Ann. Statist. \textbf{31}, no.~3, 705--767.
	
	\bibitem{nishihara2014parallel}
	\textsc{Nishihara, R.}, \textsc{Murray, I.} and \textsc{Adams, R.~P.} (2014). \textit{Parallel {MCMC} with
		generalized elliptical slice sampling}, The Journal of Machine Learning
	Research \textbf{15}, no.~1, 2087--2112.
	
	\bibitem{NoRu14}
	\textsc{Novak, E.} and \textsc{Rudolf, D.} (2014). \textit{Computation of expectations by {M}arkov chain
		{M}onte {C}arlo methods}, Extraction of Quantifiable Information from Complex
	Systems (S.~Dahlke et~al., ed.), Lecture Notes in Computational Science and
	Engineering, vol. 102, Springer, Cham, pp.~397--411.
	
	\bibitem{Ol09}
	\textsc{Ollivier, Y.} (2009). \textit{Ricci curvature of {M}arkov chains on metric spaces}, J.
	Funct. Anal. \textbf{256}, no.~3, 810--864.
	
	\bibitem{RoRo97}
	\textsc{Roberts, G.} and \textsc{Rosenthal, J.} (1997). \textit{Geometric ergodicity and hybrid {M}arkov
		chains}, Electron. Comm. Probab. \textbf{2}, no.\ 2, 13--25.
	
	\bibitem{RoRo99}
	\textsc{Roberts, G.} and \textsc{Rosenthal, J.} (1999). \textit{Convergence of slice sampler {M}arkov chains}, J. R. Stat. Soc.
	Ser. B Stat. Methodol. \textbf{61}, no.~3, 643--660.
	
	\bibitem{RoRo02}
	\textsc{Roberts, G.} and \textsc{Rosenthal, J.} (2002). \textit{The polar slice sampler}, Stoch. Models \textbf{18},
	no.~2, 257--280.
	
	\bibitem{Ru12}
	\textsc{Rudolf, D.} (2012). \textit{Explicit error bounds for {M}arkov chain {M}onte {C}arlo},
	Dissertationes Math. \textbf{485}, 93 pp.
	
	\bibitem{RuSc18}
	\textsc{Rudolf, D.} and \textsc{Schweizer, N.} (2018). \textit{Perturbation theory for {M}arkov chains via
		{W}asserstein distance}, Bernoulli \textbf{24}, no.~4A, 2610--2639.
	
	\bibitem{RuUl13}
	\textsc{Rudolf, D.} and \textsc{Ullrich, M.} (2013). \textit{{Positivity of hit-and-run and related
			algorithms}}, Electron. Commun. Probab. \textbf{18}, 1--8.
	
	\bibitem{RuUl15}
	\textsc{Rudolf, D.} and \textsc{Ullrich, M.} (2018). \textit{Comparison of hit-and-run, slice sampling and
		random walk {M}etropolis}, J. Appl. Probab. \textbf{55}, 1186--1202.
	
	\bibitem{tibbits2014automated}
	\textsc{Tibbits, M.~M.}, \textsc{Groendyke, C.}, \textsc{Haran, M.} and \textsc{Liechty J.~C.} (2014). \textit{Automated
		factor slice sampling}, Journal of Computational and Graphical Statistics
	\textbf{23}, no.~2, 543--563.
	
	\bibitem{tibbits2011parallel}
	\textsc{Tibbits, M.~M.}, \textsc{Haran, M.} and \textsc{Liechty, J.~C.} (2011). \textit{Parallel multivariate slice
		sampling}, Statistics and Computing \textbf{21}, no.~3, 415--430.
	
	\bibitem{Ul14}
	\textsc{{Ullrich}, M.} (2014). \textit{{Rapid mixing of Swendsen-Wang dynamics in two
			dimensions}}, Dissertationes Math. \textbf{502}, 64 pp.
	
	\bibitem{Vi03}
	\textsc{Villani, C.} (2003). \textit{Topics in optimal transportation}, American Mathematical
	Society.
	
	\bibitem{r1}
	\textsc{Billingsley, P.} (1999). \textit{Convergence of
		Probability Measures}, 2nd ed.
	Wiley, New York.
	
	\bibitem{r2}
	\textsc{Bourbaki, N.}  (1966). \textit{General Topology}  \textbf{1}.
	Addison--Wesley, Reading, MA.
	
	\bibitem{r3}
	\textsc{Ethier, S. N.} and \textsc{Kurtz, T. G.} (1985).
	\textit{Markov Processes: Characterization and Convergence}.
	Wiley, New York.
	
	\bibitem{r4}
	\textsc{Prokhorov, Yu.} (1956).
	Convergence of random processes and limit theorems in probability
	theory. \textit{Theory  Probab.  Appl.}
	\textbf{1} 157--214.
\end{thebibliography}

\providecommand{\bysame}{\leavevmode\hbox to3em{\hrulefill}\thinspace}
\providecommand{\MR}{\relax\ifhmode\unskip\space\fi MR }
\providecommand{\MRhref}[2]{%
	\href{http://www.ams.org/mathscinet-getitem?mr=#1}{#2}
}
\providecommand{\href}[2]{#2}

\end{document}